\title[The Fr\"olicher spectral sequence of certain solvmanifolds]
{The Fr\"olicher spectral sequence of certain solvmanifolds}
\author{Hisashi Kasuya}
\theoremstyle{plain}
\newtheorem{theorem}{Theorem}[section] 
\theoremstyle{remark}
\newtheorem{remark}{Remark}
\theoremstyle{lemma}
\newtheorem{lemma}[theorem]{Lemma}
\theoremstyle{assumption}
\newtheorem{assumption}[theorem]{Assumption}
\theoremstyle{condition}
\theoremstyle{definition}
\newtheorem{definition}[theorem]{Definition}
\theoremstyle{proposition}
\newtheorem{proposition}[theorem]{Proposition}
\theoremstyle{corollary}
\newtheorem{corollary}[theorem]{Corollary}
\theoremstyle{remark}
\newtheorem{example}{Example}
\address[H.kasuya]{Graduate school of mathematical science university of tokyo japan }
\curraddr{}
\email{khsc@ms.u-tokyo.ac.jp}
\keywords{Dolbeault cohomology, solvmanifold, Fr\"olicher spectral sequence}
\subjclass[2010]{22E25, 53C30,53C55}
\newcommand{\C}{\mathbb{C}}
\newcommand{\R}{\mathbb{R}}
\newcommand{\N}{\mathbb{N}}
\newcommand{\Z}{\mathbb{Z}}
\newcommand{\g}{\frak{g}}
\newcommand{\n}{\frak{n}}
\begin{document} 

\maketitle
\begin{abstract}
We show that the  Fr\"olicher spectral sequence of a complex parallelizable solvmanifold is degenerate at $E_{2}$-term.
For a   semi-direct product $G=\C^{n}\ltimes_{\phi}N$ of Lie-groups  with  lattice $\Gamma=\Gamma^{\prime}\ltimes \Gamma^{\prime\prime}$ such that $N$ is a nilpotent Lie-group with a left-invariant complex structure and $\phi$ is a semi-simple action, 
we also show that, if the Fr\"olicher spectral sequence of the nilmanifold $N/\Gamma^{\prime\prime}$ is degenerate at $E_{r}$-term for $r\ge 2$, then
the Fr\"olicher spectral  sequence of the solvmanifold $G/\Gamma$ is also degenerate at $E_{r}$-term.
\end{abstract}
\section{Introduction}
Let $M$ be a compact complex manifold.
The Fr\"olicher spectral sequence $E_{\ast}^{\ast,\ast}(M)$ of $M$ is the spectral sequence of the $\C$-valued de Rham complex $(A^{\ast}(M),d)$ which is given by the double complex $(A^{\ast,\ast}(M),\partial,\bar\partial)$.
Consider the differential $d_{\ast}$ on  $E_{\ast}^{\ast,\ast}(M)$.
We denote 
\[r(M)={\rm min}\{r\in \N\vert \forall s\ge r,\, d_{s}=0\}.
\]
It is well-known that $r(M)=1$ if $M$ is K\"ahler.
In general $r(M)=1$ does not hold.
Hence we can say that $r(M)$ measures how far $M$ is from being a K\"ahler manifold.
We are interested in finding non-K\"ahler complex manifolds which has large $r(M)$.

Consider a nilmanifold $G/\Gamma$ with a left-invariant complex structure where $G$ is a simply connected nilpotent Lie-group and $\Gamma$ is a lattice in $G$.
In \cite{CFG}, it is proved that if $G/\Gamma$ is complex parallelizable then $r(G/\Gamma)\le 2$.
However in general for arbitrary $k\in \N$ Rollenske proved that there exists a nilmanifold $G/\Gamma$ with a left-invariant complex structure such that $r(G/\Gamma)\ge k$.

Considering solvmanifolds, it is natural to expect that we can find a wider variety of complex solvmanifolds with large $r(M)$  than the case of nilmanifolds.
In this paper we  consider such expectation.

Our first result extends  results in \cite{CFG}.
\begin{theorem}\label{cos}
Let $G$ be a simply connected complex solvable Lie-group with a lattice $\Gamma$.
Then we have $r(G/\Gamma)\le 2$.
\end{theorem}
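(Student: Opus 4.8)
The plan is to compute everything on the finite-dimensional bicomplex of left-invariant forms and then argue that it controls the genuine Fr\"olicher spectral sequence. Since $G$ is a complex Lie group, choose a basis $X_{1},\dots,X_{n}$ of the complex Lie algebra $\g$ and let $\omega_{1},\dots,\omega_{n}$ be the dual left-invariant holomorphic $1$-forms, which trivialize the holomorphic cotangent bundle of $G/\Gamma$. The Maurer--Cartan equation gives $d\omega_{i}=-\tfrac12\sum_{j,k}c^{i}_{jk}\,\omega_{j}\wedge\omega_{k}$, a form of pure type $(2,0)$; hence $\bar\partial\omega_{i}=0$ and $\partial\omega_{i}=d\omega_{i}$, while by conjugation $\partial\bar\omega_{i}=0$ and $\bar\partial\bar\omega_{i}=d\bar\omega_{i}$ is of pure type $(0,2)$. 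First I would record the consequence: on the invariant bicomplex $\bigwedge^{\bullet}\g^{\ast}\otimes\bigwedge^{\bullet}\bar\g^{\ast}$ the operator $\partial$ acts only on the holomorphic factor (as the Chevalley--Eilenberg differential $d_{\g}$ of $\g$) and $\bar\partial$ only on the anti-holomorphic factor (as $\overline{d_{\g}}$). In other words this bicomplex is the tensor product of $(\bigwedge^{\bullet}\g^{\ast},d_{\g})$ with its conjugate, the two differentials living on separate tensor factors.

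For such a tensor-product bicomplex the Fr\"olicher spectral sequence degenerates at $E_{2}$, and this is the heart of the matter. Indeed $E_{1}^{p,q}=\bigwedge^{p}\g^{\ast}\otimes \overline{H^{q}(\g)}$, the induced $d_{1}$ is $d_{\g}\otimes\mathrm{id}$, so $E_{2}^{p,q}=H^{p}(\g)\otimes\overline{H^{q}(\g)}$. Since the total complex is the tensor product of $(\bigwedge^{\bullet}\g^{\ast},d_{\g})$ with its conjugate, the algebraic K\"unneth theorem gives $\sum_{p+q=n}\dim E_{2}^{p,q}=\dim H^{n}(\mathrm{Tot})$; as $E_{\infty}$ is a subquotient of $E_{2}$ whose total dimension already equals $\dim H^{n}(\mathrm{Tot})$, this forces $E_{2}=E_{\infty}$. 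Thus $d_{s}=0$ for all $s\ge2$ on the invariant model, and note that the sequence need \emph{not} collapse already at $E_{1}$ because $d_{1}=d_{\g}$ is generally nonzero.

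It remains to transfer this to $G/\Gamma$ itself, which is the main obstacle. The inclusion $\iota\colon(\bigwedge^{\bullet}\g^{\ast}\otimes\bigwedge^{\bullet}\bar\g^{\ast},\partial,\bar\partial)\hookrightarrow(A^{\bullet,\bullet}(G/\Gamma),\partial,\bar\partial)$ is a morphism of bicomplexes and hence induces a morphism of Fr\"olicher spectral sequences; a diagram chase shows that if $\iota$ is an isomorphism on $E_{1}$, that is, if the invariant forms compute $H^{\bullet,\bullet}_{\bar\partial}(G/\Gamma)$, then the $E_{2}$-degeneration above propagates to the genuine spectral sequence and yields $r(G/\Gamma)\le2$. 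The substantive point, absent for general solvmanifolds, is therefore the quasi-isomorphism $\iota$ for $\bar\partial$. For nilpotent $G$ this is classical (and recovers \cite{CFG}), so the new content is the solvable case. To handle it I would pass to the finite-dimensional model adapted to the semisimple part of the adjoint representation: the relevant characters $\alpha_{i}\colon G\to\C^{\ast}$ are \emph{holomorphic} because $G$ is a complex group, so twisting the $\omega_{I}\wedge\bar\omega_{J}$ by $\Gamma$-invariant combinations of the $\alpha_{i}$ preserves the type computations above (holomorphic characters satisfy $\bar\partial\alpha_{i}=0$), whence the enlarged model retains the separated-differential, tensor-product structure and hence the same $E_{2}$-degeneration. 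A Mostow-type argument (after a finite cover making these characters unitary along $\Gamma$) identifying this model with $H^{\bullet,\bullet}_{\bar\partial}(G/\Gamma)$ then completes the proof. The delicate step is precisely checking that twisting by the holomorphic characters introduces no cross terms mixing $\partial$ and $\bar\partial$, so that the tensor-product structure --- and with it the clean $E_{2}$-collapse --- survives on the model that actually computes the Dolbeault cohomology.
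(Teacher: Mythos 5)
Your computation on the finite-dimensional model is sound and matches the paper's: separated differentials make the model a (direct sum of) tensor-product bicomplex(es), and the K\"unneth dimension count $\dim E_{2}=\dim H^{\ast}(\mathrm{Tot})$ forces collapse at $E_{2}$. The genuine gap is in the transfer step, and it is exactly the point you flag but then dispose of too quickly. For a complex parallelizable \emph{solvmanifold}, the inclusion of left-invariant forms into $(A^{0,\ast}(G/\Gamma),\bar\partial)$ is in general \emph{not} a quasi-isomorphism: the Dolbeault cohomology genuinely depends on the choice of lattice. What actually computes $H^{0,\ast}_{\bar\partial}(G/\Gamma)$ is the paper's Theorem \ref{MMTT} (that is, \cite[Corollary 6.2]{KDD}): the complex $B^{\ast}_{\Gamma}$ spanned by the forms $\frac{\bar\alpha_{I}}{\alpha_{I}}\bar x_{I}$ with $(\bar\alpha_{I}/\alpha_{I})|_{\Gamma}=1$. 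Note that the twisting functions here are the ratios $\bar\alpha_{I}/\alpha_{I}$ --- neither holomorphic nor anti-holomorphic --- and the selection rule depends on $\Gamma$; these non-invariant classes are precisely what the invariant bicomplex misses. Your proposed substitute for this theorem, ``a Mostow-type argument (after a finite cover making these characters unitary along $\Gamma$),'' would fail. The characters $\bar\alpha_{I}/\alpha_{I}$ are already unitary (they have modulus $1$ everywhere); the relevant issue is their triviality on $\Gamma$, not unitarity, and if you meant the $\alpha_{i}$ themselves, a character with $|\alpha_{i}(\gamma)|\neq 1$ for some $\gamma\in\Gamma$ stays non-unitary on every finite-index subgroup (look at powers of $\gamma$), so no finite cover helps. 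Moreover, Mostow-type results concern de Rham cohomology, and even there invariant forms can fail to compute $H^{\ast}(G/\Gamma)$ for solvmanifolds; the Dolbeault statement you need is the substantive theorem \cite[Corollary 6.2]{KDD}, not a routine consequence of it.

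Two lesser points. First, once $B^{\ast}_{\Gamma}$ is in hand, your worry about ``cross terms'' is settled in the paper not by holomorphy of the twisting functions alone but by a weight decomposition: $\bigwedge\g^{\ast}_{+}\otimes B^{\ast}_{\Gamma}=\bigoplus_{k}\left(\nu_{k}\bigwedge\g^{\ast}_{+}\right)\otimes\left(\frac{1}{\bar\nu_{k}}V_{\bar\nu_{k}}\right)$, a direct sum over holomorphic characters $\nu_{k}$ in which each summand is the tensor product of a pure-$\partial$ complex and a pure-$\bar\partial$ complex; K\"unneth is then applied summand by summand, not to one global tensor product. Second, your comparison mechanism (an isomorphism of spectral sequences at one page propagates to all later pages) is correct and is the same use of \cite[Theorem 3.5]{mc} as in the paper. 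So the architecture of your argument coincides with the paper's; what is missing is the one hard input that makes it run, and the sketched replacement for that input is not repairable as stated.
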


We consider a solvable Lie-group $G$ with the following assumption.
\begin{assumption}\label{Ass}
$G$ is the semi-direct product $\C^{n}\ltimes _{\phi}N$ so that:\\
(1) $N$ is a simply connected nilpotent Lie-group with a left-invariant complex structure $J$.\\
Let $\frak a$ and $\n$ be the Lie algebras of $\C^{n}$ and $N$ respectively.\\
(2) For any $t\in \C^{n}$, $\phi(t)$ is a holomorphic automorphism of $(N,J)$.\\
(3) $\phi$ induces a semi-simple action on the Lie-algebra $\n$ of $N$.\\
(4) $G$ has a lattice $\Gamma$. (Then $\Gamma$ can be written by $\Gamma=\Gamma^{\prime}\ltimes_{\phi}\Gamma^{\prime\prime}$ such that $\Gamma^{\prime}$ and $\Gamma^{\prime\prime}$ are  lattices of $\C^{n}$ and $N$ respectively and for any $t\in \Gamma^{\prime}$ the action $\phi(t)$ preserves $\Gamma^{\prime\prime}$.) \\
(5) The inclusion $\bigwedge^{\ast,\ast}\n^{\ast}\subset A^{\ast,\ast}(N/\Gamma^{\prime\prime})$ induces an isomorphism 
\[H^{\ast,\ast}_{\bar\partial}(\n)\cong H^{\ast,\ast}_{\bar\partial }(N/\Gamma^{\prime\prime}).\]
\end{assumption}
Then in \cite{Kd}, we construct an explicit finite dimensional sub-differential bigraded algebra $B^{\ast,\ast} \subset (A^{\ast,\ast}(G/\Gamma),\bar\partial)$ which computes the Dolbeault cohomology $H^{\ast, \ast}_{\bar\partial}(G/\Gamma)$ of $G/\Gamma$.
By this, we can observe that the Dolbeault cohomology $H^{\ast, \ast}_{\bar\partial}(G/\Gamma)$ varies for a choice of a lattice $\Gamma$.
Hence the computation of  $H^{\ast, \ast}_{\bar\partial}(G/\Gamma)$ is more complicated than the computation of $H^{\ast,\ast}_{\bar\partial }(N/\Gamma^{\prime\prime})$.
We prove:

\begin{theorem}\label{sps}
Let $G$ be a Lie group as in Assumption \ref{Ass}.
Then we have:

$\bullet$ If $r(N/\Gamma^{\prime\prime})=1$, then we have $r(G/\Gamma)\le 2$.

$\bullet$ If  $r(N/\Gamma^{\prime\prime})>1$, then we have $r(G/\Gamma)\le r(N/\Gamma^{\prime\prime})$.
\end{theorem}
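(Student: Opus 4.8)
The plan is to transfer the entire computation to the finite-dimensional model $B^{\ast,\ast}\subset(A^{\ast,\ast}(G/\Gamma),\bar\partial)$ of \cite{Kd} and then to decompose that model according to the weights of $\phi$. First I would note that $B^{\ast,\ast}$ is in fact a sub-\emph{double} complex: since the left-invariant complex structure, and hence $\partial$ and $\bar\partial$, are preserved by left translations, the generators of $B^{\ast,\ast}$ are closed under $\partial$ as well as under $\bar\partial$. Thus the inclusion $B^{\ast,\ast}\hookrightarrow A^{\ast,\ast}(G/\Gamma)$ is a morphism of double complexes, hence of Fr\"olicher spectral sequences, and by construction it induces an isomorphism on $E_{1}=H^{\ast,\ast}_{\bar\partial}$. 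Both spectral sequences are first-quadrant, hence convergent, so the comparison theorem yields isomorphisms on every page $E_{r}$ ($r\ge1$) compatible with the differentials $d_{r}$; consequently $r(G/\Gamma)$ equals the degeneration index of the spectral sequence of the finite complex $B^{\ast,\ast}$.

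Next I would exploit semisimplicity. Because $\phi$ acts by holomorphic automorphisms preserving the bigrading and commuting with $\partial_{\n},\bar\partial_{\n}$, the invariant double complex $\bigwedge^{\ast,\ast}\n^{\ast}$ splits as a direct sum of double complexes $\bigwedge^{\ast,\ast}\n^{\ast}=\bigoplus_{\chi}C_{\chi}$ indexed by the (constant) weights $\chi$ of $\phi$. Unwinding the description of $B^{\ast,\ast}$ in \cite{Kd}, namely stripping off the twisting exponential $e^{\nu}$ that renders each generator $\Gamma$-invariant, I expect $B^{\ast,\ast}$ to become isomorphic, as a double complex, to $\bigoplus_{\chi}\bigl(\bigwedge^{\ast,\ast}\frak a^{\ast}\otimes C_{\chi}\bigr)$, where on the $\chi$-summand the differential is the internal differential $1\otimes(\partial_{\n}+\bar\partial_{\n})$ plus wedging by a \emph{constant} $1$-form $\epsilon_{\chi}=\epsilon^{1,0}_{\chi}+\epsilon^{0,1}_{\chi}\in\frak a^{\ast}_{\C}$, with $\epsilon_{\chi}\ne0$ precisely when $\chi\ne0$. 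This identification is the heart of the argument and the step I expect to be the main obstacle: one must read off from the explicit twisting functions of \cite{Kd} that the weight contributes exactly a constant Koszul wedge, in particular reconciling the unitary and non-unitary parts of the eigenvalue characters that the existence of the lattice forces.

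Granting this, the spectral sequence of $B^{\ast,\ast}$ is the direct sum of those of the summands, so $r(G/\Gamma)=\max_{\chi}r(B_{\chi})$, and I would finish by a case analysis governed by $\epsilon_{\chi}$. If $\epsilon^{0,1}_{\chi}\ne0$, then contraction against a vector $v\in\frak a^{0,1}$ dual to $\epsilon^{0,1}_{\chi}$ is a chain homotopy for $\bar\partial_{B}=1\otimes\bar\partial_{\n}+\epsilon^{0,1}_{\chi}\wedge$, since $\iota_{v}$ and $1\otimes\bar\partial_{\n}$ act on different tensor factors and therefore anticommute; hence $B_{\chi}$ is $\bar\partial$-acyclic and $E_{1}(B_{\chi})=0$. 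If $\epsilon^{0,1}_{\chi}=0\ne\epsilon^{1,0}_{\chi}$, the analogous Koszul homotopy makes $d_{1}=\partial$ acyclic on $E_{1}$, so $E_{2}(B_{\chi})=0$; in either case $r(B_{\chi})\le2$. For the weight-zero summand $B_{0}=\bigwedge^{\ast,\ast}\frak a^{\ast}\otimes C_{0}$ the torus factor carries trivial $\partial$ and $\bar\partial$, so the K\"unneth formula for the spectral sequence of a tensor product of double complexes over $\C$ gives $r(B_{0})=r(C_{0})$. Finally $C_{0}$ is a direct summand of $\bigwedge^{\ast,\ast}\n^{\ast}$, whence $r(C_{0})\le r(\bigwedge^{\ast,\ast}\n^{\ast})$, and Assumption \ref{Ass}(5) together with the comparison theorem identifies the latter with $r(N/\Gamma^{\prime\prime})$. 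Combining, $r(G/\Gamma)=\max_{\chi}r(B_{\chi})\le\max\{2,\,r(N/\Gamma^{\prime\prime})\}$, which is exactly the two asserted bounds.
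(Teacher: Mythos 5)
Your architecture is the same as the paper's: pass to the finite model $B^{\ast,\ast}$ of \cite{Kd} via the $E_{1}$-isomorphism and the comparison theorem, decompose $B^{\ast,\ast}$ by the weights of the semi-simple action, kill the twisted summands at $E_{2}$ by a Koszul-type vanishing, and bound the untwisted part by $r(N/\Gamma^{\prime\prime})$. However, the identification you yourself single out as ``the heart of the argument'' is wrong on both counts, and the error propagates into your final case analysis. By Theorem \ref{CORR}, a weight $\mu_{i}$ contributes to $B^{\ast,\ast}$ \emph{only} when the unique unitary character $\lambda_{i}$ with $\lambda_{i}\mu_{i}^{-1}$ holomorphic satisfies $(\lambda_{i})\vert_{\Gamma}=1$; the other weight spaces are simply absent (this lattice-dependence is the whole point of \cite{Kd}). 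Moreover, on a contributing summand the Koszul twist is $d\log(\lambda_{i}\mu_{i}^{-1})$, a constant $(1,0)$-form (so your $\epsilon^{0,1}_{\chi}$ case never occurs), and it vanishes if and only if $(\mu_{i})\vert_{\Gamma}=1$, equivalently iff $\mu_{i}$ is unitary --- \emph{not} iff $\mu_{i}$ is the trivial weight. Untwisted summands of nonzero weight genuinely occur: in the paper's second example the form $y_{1}\wedge\bar y_{3}$ transforms by the character $e^{2\pi\sqrt{-1}t}$, which is nontrivial but restricts trivially to $\Gamma^{\prime}=\Z+\sqrt{-1}\Z$, so it sits \emph{untwisted} in $B^{1,1}$, and it survives to $E_{\infty}$ (the class $[y_{1}\wedge\bar y_{3}]$ is nonzero in $H^{2}(G/\Gamma)$). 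Under your dichotomy every nonzero-weight summand is twisted and dies at $E_{2}$, so such summands are missing from your bound $r(G/\Gamma)\le\max\{2,\,r(N/\Gamma^{\prime\prime})\}$; as written the proof is therefore incorrect.

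The repair is local, and once made your route is actually a bit more economical than the paper's. The correct decomposition is $B^{\ast,\ast}=\bigoplus_{(\lambda_{i})\vert_{\Gamma}=1}\bigwedge\frak a_{\C}^{\ast}\otimes\left(\lambda_{i}\mu_{i}^{-1}A_{\mu_{i}}\right)$ in the paper's notation. The summands with $(\mu_{i})\vert_{\Gamma}\not=1$ carry a nonzero constant $(1,0)$ twist and die at $E_{2}$ exactly by your contraction homotopy (this is Lemma \ref{e2e}, proved in the paper by K\"unneth plus vanishing of Lie-algebra cohomology of $\C^{n}$ with nontrivial coefficients). Each untwisted summand $\bigwedge\frak a_{\C}^{\ast}\otimes A_{\mu_{i}}$ with $(\mu_{i})\vert_{\Gamma}=1$ --- not only $\mu_{i}$ trivial --- is then handled by your own K\"unneth-plus-direct-summand argument, since \emph{every} $A_{\mu_{i}}$ is a direct summand of the double complex $\bigwedge^{\ast,\ast}\n^{\ast}_{\C}$, whence $r(A_{\mu_{i}})\le r(\bigwedge^{\ast,\ast}\n^{\ast}_{\C})=r(N/\Gamma^{\prime\prime})$ by condition (5) of Assumption \ref{Ass} and the comparison theorem; combining gives the two asserted bounds. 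Note that this direct-summand observation is genuinely simpler than the paper's treatment of the untwisted part: the paper embeds its complex $C^{\ast,\ast}$ into $\bigwedge^{\ast,\ast}(\C^{n}\oplus\n)_{\C}^{\ast}$ and invokes the Poincar\'e-duality/Hodge-star machinery of its Section 2 (Proposition \ref{fispe}) to obtain page-wise injections $E_{r}(C)\hookrightarrow E_{r}(\bigwedge(\C^{n}\oplus\n)_{\C}^{\ast})$, whereas $C^{\ast,\ast}$ is in fact a direct summand of that double complex (its complement $\bigoplus_{(\mu_{i})\vert_{\Gamma}\not=1}\bigwedge\frak a_{\C}^{\ast}\otimes A_{\mu_{i}}$ being a subcomplex as well), so the injectivity is automatic and Section 2 becomes unnecessary for this theorem. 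One last small point: your justification that $B^{\ast,\ast}$ is $\partial$-closed (left-invariance of the complex structure) is not a proof, since elements of $B^{\ast,\ast}$ are only $\Gamma$-invariant, not left-invariant; closure under $\partial$ follows instead from the displayed decomposition, because $d(\lambda_{i}\mu_{i}^{-1}\omega)=\lambda_{i}\mu_{i}^{-1}\left(d\log(\lambda_{i}\mu_{i}^{-1})\wedge\omega+d\omega\right)$ stays in the same summand.
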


\begin{corollary}
Let $G$ be a Lie group as in Assumption \ref{Ass}.
Suppose $N$ is complex parallelizable.
Then we have $r(G/\Gamma)\le 2$.
\end{corollary}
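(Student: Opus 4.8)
The plan is to deduce the corollary directly from Theorem \ref{sps} together with the bound for complex parallelizable nilmanifolds recalled in the introduction. The first step is to observe that when $N$ is complex parallelizable, the nilmanifold $N/\Gamma^{\prime\prime}$ is itself a complex parallelizable nilmanifold: the left-invariant complex structure $J$ on $N$ descends to $N/\Gamma^{\prime\prime}$ and retains the structure of a complex parallelizable manifold. Hence the result of \cite{CFG} quoted above applies to $N/\Gamma^{\prime\prime}$, giving $r(N/\Gamma^{\prime\prime}) \le 2$.

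With this bound in hand, I would invoke the dichotomy of Theorem \ref{sps}. If $r(N/\Gamma^{\prime\prime}) = 1$, then the first bullet of Theorem \ref{sps} yields $r(G/\Gamma) \le 2$ immediately. Otherwise $r(N/\Gamma^{\prime\prime}) > 1$; combined with the bound $r(N/\Gamma^{\prime\prime}) \le 2$ just established, this forces $r(N/\Gamma^{\prime\prime}) = 2$, and the second bullet of Theorem \ref{sps} gives $r(G/\Gamma) \le r(N/\Gamma^{\prime\prime}) = 2$. In either case we obtain $r(G/\Gamma) \le 2$, which is the desired conclusion.

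Since the corollary is essentially a formal consequence of Theorem \ref{sps} once the input $r(N/\Gamma^{\prime\prime}) \le 2$ is secured, I do not anticipate any substantial obstacle. The only point requiring care is the verification that complex parallelizability of $N$ indeed passes to the nilmanifold $N/\Gamma^{\prime\prime}$, so that the cited bound from \cite{CFG} genuinely applies; this is standard, but it is the place where one should be explicit in the write-up.
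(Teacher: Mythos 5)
Your proof is correct and matches the paper's intended argument: the corollary is stated in the paper as an immediate consequence of Theorem \ref{sps} together with the bound $r(N/\Gamma^{\prime\prime})\le 2$ for complex parallelizable nilmanifolds from \cite{CFG}, exactly as you argue via the two-case dichotomy. Your added care about complex parallelizability descending to $N/\Gamma^{\prime\prime}$ is a reasonable point to make explicit, but it is standard and implicit in the paper.
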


\section{Finite-dimensional differential graded algebras of Poincar\'e duality type}
In this section we study the homological algebra of finite-dimensional differential graded algebras  like the theory of harmonic forms on compact Hermitian manifolds.
\begin{definition}
(DGA) A differential graded algebra (DGA) is a graded  commutative $\C$-algebra $A^{\ast}$  with a differential $d$ of degree +1 so that $d\circ d=0$ and $d(\alpha\cdot \beta)=d\alpha\cdot \beta+(-1)^{p}\alpha\cdot d\beta$ for $\alpha\in A^{p}$.

(DBA) A differential bigraded algebra (DBA) is a DGA $(A^{\ast},\bar\partial)$ such that $A^{\ast}$ is bigraded as $A^{r}=\bigoplus _{r=p+q}A^{p,q}$ and the differential $\bar\partial$ has type $(0,1)$.

(BBA) A bidifferential bigraded algebra (BBA) is a DBA $(A^{\ast},\bar\partial)$ with another differential $\partial$ of type $(1,0)$ such that $\partial\bar\partial+\bar\partial\partial=0$.
\end{definition}

Let $A^{\ast}$ be a finite-dimensional graded commutative $\C$-algebra.
\begin{definition}
$A^{\ast}$ is of Poincar\'e duality type (PD-type) if the following conditions hold:

$\bullet$ $A^{\ast<0}=0$ and $A^{0}=\C 1$ where $1$ is the identity element of $A^{\ast}$.

$\bullet$ For some positive integer $n$, $A^{\ast>n}=0$ and $A^{n}=\C v$ for $v\not=0$.

$\bullet$ For any $0<i<n$ the bi-linear map $A^{i}\times A^{n-i}\ni (\alpha,\beta)\mapsto \alpha\cdot \beta\in A^{n}$ is non-degenerate.
\end{definition}

Suppose $A^{\ast}$ is of PD-type.
Let $h$ be a Hermitian metric on $A^{\ast}$ which is compatible with the grading.
Take $v\in A^{n}$ such that $h(v,v)=1$.
Define the $\C$-anti-linear map $\bar\ast: A^{i}\to A^{n-i}$ as $\alpha\cdot \bar\ast\beta=h(\alpha,\beta)v$.

\begin{definition}
A finite-dimensional DGA $(A^{\ast},d)$ is of PD-type if  the following conditions hold:

$\bullet$  $A^{\ast}$ is  a finite-dimensional graded $\C$-algebra of PD-type.

$\bullet$ $dA^{n-1}=0$ and $dA^{0}=0$.
\end{definition}

Let $(A^{\ast},d)$ be a finite-dimensional DGA of PD-type.
Denote $d^{\ast}=-\bar\ast d\bar\ast$.
\begin{lemma}\label{add}
We have $h(d\alpha, \beta)=h(\alpha,d^{\ast}\beta)$ for $\alpha\in A^{i-1}$ and $\beta\in A^{i}$.
\end{lemma}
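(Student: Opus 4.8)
The plan is to prove Lemma \ref{add} as an abstract ``integration by parts'' identity, exactly mimicking the proof that $d$ and $d^{\ast}$ are formal adjoints in Hodge theory. Here the role of integration over the manifold is played by the canonical isomorphism $A^{n}\cong\C$ sending $cv\mapsto c$, while the role of Stokes' theorem is played precisely by the hypothesis $dA^{n-1}=0$ built into the definition of a DGA of PD-type. These are the only two global inputs; everything else is the definition of $\bar\ast$ and formal manipulation.

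First I would fix $\alpha\in A^{i-1}$ and $\beta\in A^{i}$ and consider the product $\alpha\cdot\bar\ast\beta$. Since $\bar\ast\beta\in A^{n-i}$, this product lies in $A^{(i-1)+(n-i)}=A^{n-1}$, so that $d(\alpha\cdot\bar\ast\beta)=0$. Expanding by the Leibniz rule gives
\[
0=d(\alpha\cdot\bar\ast\beta)=d\alpha\cdot\bar\ast\beta+(-1)^{i-1}\alpha\cdot d\bar\ast\beta .
\]
The first summand is immediately interpretable: applying the defining property of $\bar\ast$ to the pair $d\alpha,\beta\in A^{i}$ yields $d\alpha\cdot\bar\ast\beta=h(d\alpha,\beta)v$. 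Thus the entire content of the lemma reduces to identifying the second summand $\alpha\cdot d\bar\ast\beta$ with a multiple of $h(\alpha,d^{\ast}\beta)\,v$.

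To do this I would rewrite $d\bar\ast\beta$ in terms of $d^{\ast}\beta=-\bar\ast d\bar\ast\beta$. Since $d\bar\ast\beta\in A^{n-i+1}$, applying $\bar\ast$ once more and invoking the involution law for $\bar\ast$ (i.e.\ that $\bar\ast\circ\bar\ast$ is a sign times the identity on each graded piece) lets me solve for $d\bar\ast\beta$ in terms of $\bar\ast(d^{\ast}\beta)$. Then, applying the defining property of $\bar\ast$ a second time, now to the pair $\alpha,d^{\ast}\beta\in A^{i-1}$, converts $\alpha\cdot\bar\ast(d^{\ast}\beta)$ into $h(\alpha,d^{\ast}\beta)\,v$. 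Substituting back into the Leibniz identity and using $A^{n}\cong\C$ to cancel $v$ then produces the claimed equality.

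The one genuinely delicate point, which must be settled before the chain above can be closed, is the sign bookkeeping. Concretely I would first have to establish the involution law for $\bar\ast$ — the abstract analogue of $\ast\ast=(-1)^{k(n-k)}$ — which itself rests on $\bar\ast$ being compatible with the Poincar\'e duality pairing (equivalently, that the product pairing $A^{i}\times A^{n-i}\to A^{n}$ behaves ``unitarily'' with respect to $h$, so that $\bar\ast$ is, up to conjugation, an isometry). Granting that, there are three signs in play: the Leibniz sign $(-1)^{i-1}$, the sign in the definition $d^{\ast}=-\bar\ast d\bar\ast$, and the sign produced by $\bar\ast\circ\bar\ast$. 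One must check that these combine to $+1$ so that $h(d\alpha,\beta)=h(\alpha,d^{\ast}\beta)$; I expect this sign verification, together with the preliminary computation of $\bar\ast\circ\bar\ast$, to be the main obstacle, whereas the structural Stokes-plus-Leibniz argument is routine.
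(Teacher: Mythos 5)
Your outline reproduces the paper's proof step for step: use $dA^{n-1}=0$ to kill $d(\alpha\cdot\bar\ast\beta)$, expand by Leibniz to get $d\alpha\cdot\bar\ast\beta=(-1)^{i}\,\alpha\cdot(d\bar\ast\beta)$, read the left-hand side as $h(d\alpha,\beta)v$ via the defining property of $\bar\ast$ in degree $i$, and convert the right-hand side into $h(\alpha,d^{\ast}\beta)v$ by inserting $\bar\ast\bar\ast$ and applying the defining property of $\bar\ast$ in degree $i-1$. So there is no difference of route; the only question is the step you defer.

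That deferred step is a genuine gap, not routine bookkeeping, and it is precisely the gap in the paper's own proof: the equality $(-1)^{p}\alpha\cdot(d\bar\ast\beta)=\alpha\cdot(\bar\ast\bar\ast d\bar\ast\beta)$ is asserted there with no justification (and with an unspecified sign $(-1)^{p}$). Your caveat that the involution law ``rests on $\bar\ast$ being compatible with the Poincar\'e duality pairing'' is exactly right, and that compatibility does \emph{not} follow from the stated hypotheses: for a Hermitian metric merely compatible with the grading, $\bar\ast\circ\bar\ast$ need not be $\pm\,\mathrm{id}$, and Lemma \ref{add} itself can fail. Concretely, let $A^{\ast}$ have basis $1;\,a_{1},a_{2};\,c_{1},c_{2};\,b_{1},b_{2};\,v$ in degrees $0,1,2,3,4$, with the only nontrivial products $a_{i}\cdot b_{j}=\delta_{ij}v$ and $c_{1}\cdot c_{2}=c_{2}\cdot c_{1}=v$, and with $da_{1}=c_{1}$, $dc_{2}=b_{1}$, all other generators closed; this is a finite-dimensional DGA of PD-type. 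With the orthonormal metric the lemma holds, but after rescaling only $h(b_{1},b_{1})=t\neq 1$ one gets $\bar\ast c_{1}=c_{2}$ and $\bar\ast b_{1}=-t\,a_{1}$, hence $d^{\ast}c_{1}=t\,a_{1}$ and $h(da_{1},c_{1})=1\neq t=h(a_{1},d^{\ast}c_{1})$. (There is also a parity issue: with the uniform convention $d^{\ast}=-\bar\ast d\bar\ast$, your three signs can only combine to $+1$ when $n$ is even; for odd $n$ the adjoint needs degree-dependent signs, as in classical Hodge theory.) So neither your argument nor the paper's closes abstractly; the lemma is correct only after normalizing $h$ so that some $h$-orthonormal basis has an $h$-orthonormal dual basis under the duality pairing, which is what forces $\bar\ast\bar\ast=\pm\,\mathrm{id}$. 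This normalization does hold for the natural left-invariant metrics on $\bigwedge(\C^{n}\oplus\n)_{\C}^{\ast}$ and on $C^{\ast,\ast}$, the cases where the paper actually applies the lemma, so the gap is harmless downstream --- but completing your proof requires adding that hypothesis (or the freedom to choose $h$), not just checking signs.
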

\begin{proof}
By $dA^{n-1}=0$, we have
\[d\alpha \cdot\bar\ast \beta=d(\alpha\cdot\bar\ast\beta)-(-1)^{i-1}\alpha\cdot (d\bar\ast\beta)=(-1)^{p}\alpha\cdot (d\bar\ast\beta)=\alpha\cdot (\bar\ast\bar\ast d\bar\ast \beta).\]
Hence we have
\[h(d\alpha,\beta)v=h(\alpha,d^{\ast}\beta)v.\]
\end{proof}

Define $\Delta=dd^{\ast}+d^{\ast}d$.
and ${\mathcal H}^{\ast}(A)=\ker \Delta$.
By Lemma \ref{add} and finiteness of the dimension of $A^{\ast}$, we can easily show the following lemma like the proof of \cite[Theorem 5.23]{Vo}.

\begin{lemma}\label{fiho}
We have the decomposition
\[A^{p}={\mathcal H}^{p}(A)\oplus \Delta(A^{p})={\mathcal H}^{p}(A)\oplus d(A^{p-1})\oplus d^{\ast}(A^{p+1}).
\]
By this decomposition, the inclusion ${\mathcal H}^{\ast}(A)\subset A^{\ast}$ induces a  isomorphism
\[{\mathcal H}^{p}(A)\cong H^{p}(A)
\]
of vector spaces.
\end{lemma}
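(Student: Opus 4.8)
The plan is to reproduce the classical finite-dimensional Hodge decomposition, with Lemma~\ref{add} supplying the adjointness of $d^{\ast}$ and with finiteness of $\dim A^{\ast}$ standing in for the elliptic theory one would need analytically. Before the main argument I would record two formal facts. First, $\Delta=dd^{\ast}+d^{\ast}d$ sends $A^{p}$ to $A^{p}$, so the entire argument can be carried out one degree at a time on the finite-dimensional Hermitian space $A^{p}$. Second, $(d^{\ast})^{2}=0$: reading Lemma~\ref{add} as $h(d\alpha,\beta)=h(\alpha,d^{\ast}\beta)$ and its mirror $h(d^{\ast}\alpha,\beta)=h(\alpha,d\beta)$, one gets $h(d^{\ast}d^{\ast}\alpha,\beta)=h(d^{\ast}\alpha,d\beta)=h(\alpha,dd\beta)=0$ for all $\beta$, whence $(d^{\ast})^{2}=0$ by non-degeneracy of $h$.

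The first substantial step is to show that $\mathcal H^{p}(A)=\ker\Delta$ equals $\ker d\cap\ker d^{\ast}$ on $A^{p}$. Using adjointness I would compute
\[
h(\Delta\alpha,\alpha)=h(dd^{\ast}\alpha,\alpha)+h(d^{\ast}d\alpha,\alpha)=h(d^{\ast}\alpha,d^{\ast}\alpha)+h(d\alpha,d\alpha),
\]
so positive-definiteness of $h$ forces $d\alpha=0$ and $d^{\ast}\alpha=0$ whenever $\Delta\alpha=0$; the converse is immediate. The same adjointness makes $\Delta$ self-adjoint, and for a self-adjoint endomorphism $T$ of a finite-dimensional inner product space one has $\operatorname{im}T=(\ker T)^{\perp}$, so $A^{p}=\mathcal H^{p}(A)\oplus\Delta(A^{p})$ as an orthogonal splitting.

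Next I would promote $\Delta(A^{p})$ to $d(A^{p-1})\oplus d^{\ast}(A^{p+1})$. The three pieces $\mathcal H^{p}(A)$, $d(A^{p-1})$, $d^{\ast}(A^{p+1})$ are pairwise orthogonal: $h(d\alpha,d^{\ast}\beta)=h(\alpha,(d^{\ast})^{2}\beta)=0$, and a harmonic element is orthogonal to each image because it is killed by both $d$ and $d^{\ast}$. Hence $d(A^{p-1})\oplus d^{\ast}(A^{p+1})\subseteq(\mathcal H^{p})^{\perp}=\Delta(A^{p})$, while the reverse inclusion is clear from $\Delta(A^{p})\subseteq d(A^{p-1})+d^{\ast}(A^{p+1})$, giving the refined decomposition. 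To obtain $\mathcal H^{p}(A)\cong H^{p}(A)$ I would intersect with $\ker d$: since $d^{\ast}\beta\in\ker d$ implies $h(d^{\ast}\beta,d^{\ast}\beta)=h(\beta,dd^{\ast}\beta)=0$, we have $\ker d\cap d^{\ast}(A^{p+1})=0$, so $\ker(d\colon A^{p}\to A^{p+1})=\mathcal H^{p}(A)\oplus d(A^{p-1})$; quotienting by $d(A^{p-1})=\operatorname{im}(d\colon A^{p-1}\to A^{p})$ then identifies $\mathcal H^{p}(A)$ with $H^{p}(A)$.

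As for the main obstacle, there really is none of an analytic nature, and this is precisely the point of working in finite dimensions: the spectral theorem (equivalently $\operatorname{im}T=(\ker T)^{\perp}$ for self-adjoint $T$) replaces the Hodge-theoretic estimates used in \cite[Theorem 5.23]{Vo}. The only genuinely non-formal inputs to keep honest are the vanishing $(d^{\ast})^{2}=0$, which is what makes the two images orthogonal and their sum direct, and the positive-definiteness of $h$, without which $h(\Delta\alpha,\alpha)=0$ would not force $d\alpha=d^{\ast}\alpha=0$; everything else is bookkeeping with the adjoint relation of Lemma~\ref{add}.
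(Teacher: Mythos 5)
Your proof is correct and takes essentially the same route the paper intends: the paper gives no details, merely noting that Lemma \ref{add} and finiteness of $\dim A^{\ast}$ allow one to argue ``like the proof of \cite[Theorem 5.23]{Vo}'', and what you have written is precisely that classical finite-dimensional Hodge argument spelled out. The ingredients you isolate --- the adjoint relation, $(d^{\ast})^{2}=0$, positive-definiteness of $h$, and $\operatorname{im}T=(\ker T)^{\perp}$ for self-adjoint $T$ replacing elliptic theory --- are exactly the ones needed, and each of your steps (pairwise orthogonality of the three summands, $\ker d\cap d^{\ast}(A^{p+1})=0$, and the resulting identification $\mathcal{H}^{p}(A)\cong H^{p}(A)$) is sound.
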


\begin{lemma}\label{rm}
Let $(A^{\ast},d)$ be a finite-dimensional DGA of PD-type.
Then the cohomology algebra $H^{\ast}(A)$ is  a finite-dimensional graded commutative $\C$-algebra of PD-type.
\end{lemma}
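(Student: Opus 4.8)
The plan is to verify the three defining conditions of PD-type for $H^{\ast}(A)$, the only substantial one being non-degeneracy of the induced product pairing. First I would read off the degree data directly from the DGA conditions. Since $A^{\ast<0}=0$ and $A^{\ast>n}=0$, the same vanishing holds in cohomology; since $A^{-1}=0$ and $dA^{0}=0$ we get $H^{0}(A)=A^{0}=\C 1$; and since $A^{n+1}=0$ and $dA^{n-1}=0$ we get $H^{n}(A)=A^{n}=\C v$, with $[v]\neq 0$ precisely because $dA^{n-1}=0$ kills all degree-$n$ coboundaries. Finite-dimensionality and graded commutativity of $H^{\ast}(A)$ are inherited from $A^{\ast}$, and the Leibniz rule shows the product descends to a well-defined pairing on cohomology. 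Thus it remains to prove that for $0<i<n$ the pairing $H^{i}(A)\times H^{n-i}(A)\to H^{n}(A)$ is non-degenerate.

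Here I would pass to harmonic representatives. By Lemma \ref{fiho} we have $H^{p}(A)\cong{\mathcal H}^{p}(A)$, and Lemma \ref{add} together with positivity of $h$ identifies ${\mathcal H}^{p}(A)=\ker d\cap\ker d^{\ast}$ in $A^{p}$ (from $h(\Delta\alpha,\alpha)=|d\alpha|^{2}+|d^{\ast}\alpha|^{2}$). A short computation gives $d^{\ast}A^{n}=-\bar\ast d\bar\ast A^{n}=-\bar\ast dA^{0}=0$, so ${\mathcal H}^{n}(A)=A^{n}=\C v$; hence for harmonic $\alpha\in{\mathcal H}^{i}(A)$ and $\beta\in{\mathcal H}^{n-i}(A)$ the cohomology pairing is just the product $\alpha\cdot\beta\in A^{n}$, with no projection needed. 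So it suffices to show that the product pairing restricted to ${\mathcal H}^{i}(A)\times{\mathcal H}^{n-i}(A)$ is non-degenerate, for which I want, given $0\neq\alpha\in{\mathcal H}^{i}(A)$, to produce a harmonic $\beta$ with $\alpha\cdot\beta\neq 0$.

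The natural candidate is $\beta=\bar\ast\alpha$, since $\alpha\cdot\bar\ast\alpha=h(\alpha,\alpha)v=|\alpha|^{2}v\neq 0$. The main obstacle is that $\bar\ast\alpha$ need not itself be harmonic: because $h$ is an arbitrary grading-compatible metric, $\bar\ast\bar\ast$ is self-adjoint but in general is not a scalar, so one cannot simply argue that $\bar\ast$ preserves $\ker\Delta$. I would circumvent this by an orthogonal-projection argument. Rewriting $d^{\ast}=-\bar\ast d\bar\ast$ as $d=-\bar\ast^{-1}d^{\ast}\bar\ast^{-1}$ and using $d^{\ast}\alpha=0$ gives $d\bar\ast\alpha=-\bar\ast^{-1}d^{\ast}\alpha=0$, so $\bar\ast\alpha$ is at least closed. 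Decomposing $\bar\ast\alpha=\beta_{0}+d\gamma+d^{\ast}\delta$ by Lemma \ref{fiho}, closedness forces $dd^{\ast}\delta=0$, whence $|d^{\ast}\delta|^{2}=h(dd^{\ast}\delta,\delta)=0$ and the coexact part vanishes; thus $\bar\ast\alpha=\beta_{0}+d\gamma$ with $\beta_{0}\in{\mathcal H}^{n-i}(A)$. Finally $\alpha\cdot d\gamma=(-1)^{i}d(\alpha\cdot\gamma)\in dA^{n-1}=0$, since $\alpha$ is closed and $dA^{n-1}=0$, so that $\alpha\cdot\beta_{0}=\alpha\cdot\bar\ast\alpha=|\alpha|^{2}v\neq 0$.

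This establishes left non-degeneracy of the pairing. Running the identical argument with the roles of $i$ and $n-i$ interchanged (using $\bar\ast\colon A^{n-i}\to A^{i}$) gives right non-degeneracy, and since the two cohomology spaces then inject into each other's duals they have equal dimension, so the pairing is non-degenerate. Together with the first paragraph this shows $H^{\ast}(A)$ is a finite-dimensional graded commutative $\C$-algebra of PD-type. I expect the crux of the write-up to be exactly the point flagged above, namely that in this abstract setting $\bar\ast$ does not send harmonic forms to harmonic forms; the projection computation, rather than a Hodge-star identity, is what makes the pairing argument go through.
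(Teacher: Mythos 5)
Your proof is correct, and it departs from the paper's exactly at the point you flagged. The paper's entire argument is one sentence: the restriction $\bar\ast\colon {\mathcal H}^{i}(A)\to {\mathcal H}^{n-i}(A)$ ``is also an isomorphism,'' hence $(\alpha,\beta)\mapsto \alpha\cdot\beta$ is non-degenerate on harmonics, hence on $H^{\ast}(A)$ via Lemma \ref{fiho}. That is, the paper asserts without justification precisely the claim you identified as unavailable: that $\bar\ast$ sends harmonic elements to harmonic elements. Your diagnosis is accurate: since $\bar\ast$ is injective, $d^{\ast}\bar\ast\alpha=0$ is equivalent to $d(\bar\ast\bar\ast\alpha)=0$, so the paper's claim amounts to $\bar\ast\bar\ast$ preserving $\ker d$, which is automatic when $\bar\ast\bar\ast=\pm 1$ (the classical Hodge star) but not for an arbitrary grading-compatible metric. (The claim can in fact be rescued assuming Lemma \ref{add}: one checks that $\bar\ast\bar\ast$ is self-adjoint and, by Lemma \ref{add}, acts as a scalar on $\mathrm{im}\, d$, hence preserves $\mathrm{im}\, d^{\ast}=\bar\ast(\mathrm{im}\, d)$ and therefore also its orthogonal complement $\ker d$; alternatively one may simply choose the metric on $A^{n-i}$ to be the one transported from $A^{i}$ through the pairing, forcing $\bar\ast\bar\ast=\pm1$. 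Neither repair appears in the paper.) Your substitute --- $\bar\ast\alpha$ is closed because $d^{\ast}\alpha=0$ and $\bar\ast$ is injective; decompose $\bar\ast\alpha=\beta_{0}+d\gamma$ by Lemma \ref{fiho}; kill $\alpha\cdot d\gamma$ using $dA^{n-1}=0$ --- reaches the same identity $\alpha\cdot\beta_{0}=h(\alpha,\alpha)v\neq 0$ while using only what Lemmas \ref{add} and \ref{fiho} literally provide, so in the stated generality yours is the complete argument and the paper's is an outline with a genuine gap at its only step. Two small remarks: once you know $\bar\ast\alpha$ is closed you could finish even faster, since $H^{n}(A)=A^{n}$ contains no exact elements and so $[\alpha]\cdot[\bar\ast\alpha]=h(\alpha,\alpha)[v]\neq 0$ directly; the projection onto $\beta_{0}$ is needed only if you want, as the paper states, non-degeneracy of the product on ${\mathcal H}^{\ast}(A)$ itself rather than on cohomology.
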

\begin{proof}
Since the restriction $\bar\ast: {\mathcal H}^{i}(A)\to {\mathcal H}^{n-i}(A)$ is also an isomorphism,
the linear map $ {\mathcal H}^{i}(A)\times  {\mathcal H}^{n-i}(A)\ni(\alpha,\beta)\to \alpha\cdot\beta \in{\mathcal H}^{n}(A)=A^{n}$ is non-degenerate.
Hence the lemma follows from Lemma \ref{fiho}.
\end{proof}
\begin{lemma}\label{injPD}
Let $(A^{\ast},d)$ be a finite-dimensional DGA of PD-type and $B^{\ast}\subset A^{\ast}$ be a sub-DGA  of PD-type.
Then the inclusion $B^{\ast}\subset A^{\ast}$ induces an injection
\[H^{\ast}(B^{\ast})\hookrightarrow H^{\ast}( A^{\ast}).
\]
\end{lemma}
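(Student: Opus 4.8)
The plan is to derive the injection from Poincar\'e duality on cohomology together with the fact that the inclusion is a morphism of algebras, rather than by comparing harmonic representatives directly (the operators $\bar\ast$ of $B^{\ast}$ and $A^{\ast}$ need not agree, so a naive Hodge-theoretic comparison is unavailable). First I would record two structural facts. Since $\iota\colon B^{\ast}\hookrightarrow A^{\ast}$ is a morphism of DGAs, the induced map $\iota^{\ast}\colon H^{\ast}(B^{\ast})\to H^{\ast}(A^{\ast})$ is a homomorphism of graded commutative $\C$-algebras with $\iota^{\ast}[1]=[1]$. Second, by Lemma \ref{rm} both $H^{\ast}(B^{\ast})$ and $H^{\ast}(A^{\ast})$ are finite-dimensional graded commutative $\C$-algebras of PD-type; in particular, writing $v_{B}$ for the fundamental element of $B^{\ast}$, the space $H^{n}(B^{\ast})=\C[v_{B}]$ is one-dimensional and the multiplication $H^{i}(B^{\ast})\times H^{n-i}(B^{\ast})\to H^{n}(B^{\ast})$ is non-degenerate.

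Granting the single claim that $\iota^{\ast}[v_{B}]\neq 0$ in $H^{n}(A^{\ast})$, the conclusion follows immediately. Let $[b]\in H^{i}(B^{\ast})$ be any nonzero class. By the non-degeneracy of the Poincar\'e pairing on $H^{\ast}(B^{\ast})$ there is a class $[c]\in H^{n-i}(B^{\ast})$ with $[b]\cdot[c]=\lambda[v_{B}]$ for some $\lambda\neq 0$ (in the extreme degrees $i=0,n$ one chooses $[c]$ by hand). Applying the algebra homomorphism $\iota^{\ast}$ and using the claim,
\[
\iota^{\ast}[b]\cdot\iota^{\ast}[c]=\lambda\,\iota^{\ast}[v_{B}]\neq 0,
\]
so $\iota^{\ast}[b]\neq 0$. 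Since this holds in every degree $i$, the map $\iota^{\ast}$ is injective, which is the assertion of Lemma \ref{injPD}.

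The heart of the proof is therefore the claim $\iota^{\ast}[v_{B}]\neq 0$, and this is the step I expect to be the main obstacle. My approach is to exploit that, as a sub-DGA of PD-type, $B^{\ast}$ has the same top degree $n$ as $A^{\ast}$, so that $v_{B}$ is a nonzero element of $A^{n}=\C v$. Because $(A^{\ast},d)$ is of PD-type we have $dA^{n-1}=0$, hence the only coboundary in top degree is $0$ and $H^{n}(A^{\ast})=A^{n}$; consequently $\iota^{\ast}[v_{B}]=[v_{B}]$ is nonzero. The delicate point to get right is precisely this matching of top degrees: what must be verified is that the fundamental class of $B^{\ast}$ does not become a coboundary once it is regarded inside $A^{\ast}$, and it is the PD-type structure of the inclusion — ensuring that $v_{B}$ spans $A^{n}$ rather than sitting in some lower degree — that rules this out.
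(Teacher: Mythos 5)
Your reduction of the lemma to the single claim $\iota^{\ast}[v_{B}]\neq 0$ is clean and correct: given Lemma \ref{rm}, the non-degeneracy of the cup-product pairing on $H^{\ast}(B^{\ast})$ plus multiplicativity of $\iota^{\ast}$ does force injectivity in every degree. This is also a genuinely different route from the paper's, which works with harmonic elements (choose metrics, invoke Lemma \ref{fiho}, and use an inclusion ${\mathcal H}^{\ast}(B)\subset{\mathcal H}^{\ast}(A)$ so that a $B$-harmonic representative of a class killed by $\iota^{\ast}$ is $A$-harmonic and $A$-exact, hence zero).

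However, your justification of the key claim has a genuine gap: it is simply not true that a sub-DGA of PD-type must have the same top degree as the ambient DGA. The definition only asks that $B^{\ast}$ be of PD-type with respect to \emph{some} positive integer $n_{B}$, and nothing in the hypotheses forces $n_{B}=n$, i.e.\ forces $v_{B}$ to lie in $A^{n}$; the phrase ``the PD-type structure of the inclusion'' does not correspond to any condition in the paper, so this step is circular. Worse, the gap cannot be closed, because the lemma as literally stated is false. Take $A^{\ast}=\bigwedge(a_{1},a_{2},a_{3})$ with $\deg a_{i}=1$, $da_{1}=da_{2}=0$, $da_{3}=a_{1}\wedge a_{2}$ (the Chevalley--Eilenberg complex of the Heisenberg Lie algebra): this is a finite-dimensional DGA of PD-type with $n=3$, and $B^{\ast}=\bigwedge(a_{1},a_{2})$ is a sub-DGA of PD-type with $n_{B}=2$, yet its fundamental class $a_{1}\wedge a_{2}=da_{3}$ is exact in $A^{\ast}$, so $H^{2}(B^{\ast})\to H^{2}(A^{\ast})$ is not injective. (The same example shows the paper's own step ${\mathcal H}^{\ast}(B)\subset{\mathcal H}^{\ast}(A)$ fails in general: here every element of $B^{\ast}$ is $B$-harmonic, but $a_{1}\wedge a_{2}$, being exact and nonzero, is never $A$-harmonic.) Some extra hypothesis is needed; equality of top degrees, i.e.\ $v\in B^{n}$, is exactly what your argument requires, and it is what actually holds in the paper's application, where the subcomplex $C^{\ast,\ast}$ of Section 4 is stable under the $\bar\ast$ operator of the ambient algebra, whence $\bar\ast 1\in B^{n}$ is a nonzero multiple of $v$. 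So your strategy is salvageable — indeed it isolates the minimal missing hypothesis — but as written the step ``$B^{\ast}$ has the same top degree $n$ as $A^{\ast}$'' is an unproved, and under the stated hypotheses unprovable, assertion.
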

\begin{proof}
By the inclusion ${\mathcal H}^{\ast}(B)\subset {\mathcal H}^{\ast}(A)$ the Lemma follows from Lemma \ref{fiho}.

\end{proof}

\begin{proposition}\label{fispe}
Let $(A^{\ast,\ast}, d=\partial+\bar\partial)$ be a  finite-dimensional BBA  such that $({\rm Tot} A^{\ast,\ast}, \partial+\bar\partial) $ is  a  finite-dimensional DGA of PD-type.
Let $B^{\ast,\ast}\subset A^{\ast,\ast}$ be a sub-BBA such that $({\rm Tot} B^{\ast,\ast}, \partial+\bar\partial)$ is  a  finite-dimensional DGA of PD-type.
Consider the spectral sequences  $E^{\ast,\ast}_{\ast}(A^{\ast,\ast})$ and $E^{\ast,\ast}_{\ast}(B^{\ast,\ast})$ given by the BBA-structure.
Then for each $r$, ${\rm Tot}E^{\ast,\ast}_{r}(A)$ and ${\rm Tot}E^{\ast,\ast}_{r}(B)$ are finite-dimensional DGAs of PD-type and
the inclusion $B^{\ast,\ast}\subset A^{\ast,\ast}$ induces an injection $E^{\ast,\ast}_{r}(A^{\ast,\ast})\hookrightarrow E^{\ast,\ast}_{r}(B^{\ast,\ast})$.
\end{proposition}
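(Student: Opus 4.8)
The plan is to argue by induction on $r$, establishing simultaneously the two assertions: that $({\rm Tot}\,E_r(A),d_r)$ and $({\rm Tot}\,E_r(B),d_r)$ are finite-dimensional DGAs of PD-type, and that the inclusion induces an injection $E_r^{\ast,\ast}(B)\hookrightarrow E_r^{\ast,\ast}(A)$ (the direction forced by $B\subset A$, as in Lemma \ref{injPD}). The engine is the pair of lemmas already proved: once a page is known to be a DGA of PD-type, Lemma \ref{rm} says its cohomology algebra is again of PD-type, and Lemma \ref{injPD} says a PD-type sub-DGA injects on cohomology. The only points needing genuine work are (a) identifying the cohomology of the total complex of a page with the total complex of the next page, and (b) verifying at each stage the conditions $d_r({\rm Tot}\,E_r)^{n-1}=0$ and $d_r({\rm Tot}\,E_r)^{0}=0$ that upgrade ``PD-type graded algebra'' to ``DGA of PD-type.''

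For (a): since $d_r$ has pure bidegree $(r,1-r)$, the map $d_r\colon ({\rm Tot}\,E_r)^{k}\to ({\rm Tot}\,E_r)^{k+1}$ decomposes as a direct sum of the maps $E_r^{p,q}\to E_r^{p+r,q+1-r}$ over $p+q=k$, whose target bidegrees are pairwise distinct; hence kernels and images respect the bigrading and $H^{k}({\rm Tot}\,E_r,d_r)=\bigoplus_{p+q=k}E_{r+1}^{p,q}=({\rm Tot}\,E_{r+1})^{k}$, compatibly with products. Thus applying Lemma \ref{rm} to $({\rm Tot}\,E_r,d_r)$ yields that ${\rm Tot}\,E_{r+1}$ is a graded commutative algebra of PD-type with the same $n$, and the surviving top class $[v]$ sits in the single bidegree $(p_0,q_0)$ determined by $v$. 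Here one first observes that the total degree-$n$ part of $A$ is concentrated in one bidegree $(p_0,q_0)$ and that this persists in every subquotient $E_r$.

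The conditions in (b): $d_r({\rm Tot}\,E_r)^{0}=0$ is automatic because $d_r$ is a derivation and the degree-$0$ part is $\C 1$. For $d_r({\rm Tot}\,E_r)^{n-1}=0$ I would separate two regimes, running the argument for $A$ and identically for $B$ with its own top bidegree. For $r\ge 2$ it is a bidegree count: a differential landing in the top class must originate in bidegree $(p_0-r,q_0+r-1)$, whose second index exceeds $q_0$; but the bigraded form of Poincar\'e duality forces $A^{p,q}=0$ whenever $q>q_0$ (pairing a nonzero element against a partner of bidegree $(p_0-p,q_0-q)$ would require a nonexistent negative conjugate degree), hence $E_r^{p,q}=0$ there and the source vanishes. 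For the low pages I would use the standing hypothesis that ${\rm Tot}\,A$ is of PD-type for the total differential: any $\tilde\alpha\in A^{p,q}$ with $p+q=n-1$ satisfies $d\tilde\alpha=0$, and since $\partial\tilde\alpha$ and $\bar\partial\tilde\alpha$ lie in distinct bidegrees this gives $\partial\tilde\alpha=\bar\partial\tilde\alpha=0$. This single observation both shows that $({\rm Tot}\,A,\bar\partial)=({\rm Tot}\,E_0,d_0)$ is already a DGA of PD-type (the base case) and that $d_1$, being induced by $\partial$, vanishes on ${\rm Tot}\,E_1$ in degree $n-1$.

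With (a) and (b) in hand the induction runs: $({\rm Tot}\,E_r(A),d_r)$ of PD-type gives $({\rm Tot}\,E_{r+1}(A),d_{r+1})$ of PD-type, and likewise for $B$; and since by the inductive hypothesis $E_r(B)\hookrightarrow E_r(A)$ realizes ${\rm Tot}\,E_r(B)$ as a PD-type sub-DGA of ${\rm Tot}\,E_r(A)$ (the spectral sequence differential being functorial in the inclusion), Lemma \ref{injPD} yields $E_{r+1}(B)\hookrightarrow E_{r+1}(A)$. I expect the main obstacle to be exactly the verification in (b): keeping each page a \emph{DGA} of PD-type, not merely a PD-type algebra, requires the interplay between the global duality for $\partial+\bar\partial$ (controlling the low pages) and the bigraded vanishing $A^{\ast,>q_0}=0$ (controlling $r\ge 2$); the bookkeeping in (a), identifying the top class's bidegree across pages, is the secondary place where care is needed.
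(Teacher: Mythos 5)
Your proposal is correct and follows the same inductive skeleton as the paper's own proof, but it is substantially more careful, and the extra care is exactly where the paper is silent. The paper's proof says the $E_{0}$ case holds ``by the assumption'' and then, assuming the claim for page $r$, invokes $H^{\ast}({\rm Tot}E^{\ast,\ast}_{r})\cong {\rm Tot}E^{\ast,\ast}_{r+1}$ together with Lemmas \ref{rm} and \ref{injPD} to pass to page $r+1$; it never verifies that the new page is a \emph{DGA} of PD-type. Lemma \ref{rm} only produces a graded algebra of PD-type, so the conditions $d_{r+1}\bigl(({\rm Tot}E_{r+1})^{n-1}\bigr)=0$ and $d_{r+1}\bigl(({\rm Tot}E_{r+1})^{0}\bigr)=0$, which are needed to re-enter the induction, are left unaddressed in the paper; your step (b) is precisely this verification, and your bidegree-splitting argument for the low pages is also what the paper's ``by the assumption'' must secretly mean for $E_{0}$. (You also silently corrected the direction of the arrow: the injection goes $E^{\ast,\ast}_{r}(B)\hookrightarrow E^{\ast,\ast}_{r}(A)$, as Lemma \ref{injPD} gives; the arrow in the paper's statement is a typo.) One caveat on your $r\ge 2$ case: the claim $A^{p,q}=0$ for $q>q_{0}$ is derived from duality plus the nonexistence of negative conjugate bidegrees, but the paper's definition of PD-type constrains only the \emph{total} grading, so this step implicitly assumes the bigrading is concentrated in $p,q\ge 0$. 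That assumption holds in the paper's application, where $A^{\ast,\ast}=\bigwedge^{\ast,\ast}(\C^{n}\oplus \n)^{\ast}_{\C}$, so your proof is complete there; if you want an argument free of it, observe instead that no differential $d_{r}$ can hit the top class: if it did, then $E_{r+1}$, and hence $E_{\infty}$, would vanish in total degree $n$, contradicting convergence of the spectral sequence of this finite-dimensional double complex to $H^{\ast}({\rm Tot}A,d)$, which is nonzero in degree $n$ because $dA^{n-1}=0$ forces $H^{n}({\rm Tot}A)=\C v$.
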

\begin{proof}
We will prove the proposition inductively.
By the assumption ${\rm Tot}E^{\ast,\ast}_{0}(A^{\ast,\ast})\cong  ({\rm Tot} A^{\ast,\ast}, \bar \partial)$ and  ${\rm Tot}E^{\ast,\ast}_{0}(B^{\ast,\ast})\cong  ({\rm Tot} B^{\ast,\ast}, \bar\partial)$
are finite-dimensional DGAs of PD-type.
Suppose that for some $r$  ${\rm Tot}E^{\ast,\ast}_{r}(A)$ and ${\rm Tot}E^{\ast,\ast}_{r}(B)$ are finite-dimensional DGAs of PD-type and
the inclusion $B^{\ast,\ast}\subset A^{\ast,\ast}$ induces an injection $E^{\ast,\ast}_{r}(A^{\ast,\ast})\subset E^{\ast,\ast}_{r}(B^{\ast,\ast})$.
Since we have $H^{\ast}({\rm Tot}E^{\ast,\ast}_{r}(A))\cong {\rm Tot}E^{\ast,\ast}_{r+1}(A)$ and $H^{\ast}({\rm Tot}E^{\ast,\ast}_{r}(B))\cong {\rm Tot}E^{\ast,\ast}_{r+1}(B)$,   by Lemma \ref{rm} and \ref{injPD}, ${\rm Tot}E^{\ast,\ast}_{r+1}(A)$ and ${\rm Tot}E^{\ast,\ast}_{r+1}(B)$ are finite-dimensional DGAs of PD-type and the induced map
 $E^{\ast,\ast}_{r+1}(A^{\ast,\ast})\to E^{\ast,\ast}_{r+1}(B^{\ast,\ast})$ is injective.

\end{proof}

\section{Proof of Theorem \ref{cos}}
Let $G$ be a simply connected  solvable Lie-group.
Denote by $ \g_{+}$ (resp. $ \g_{-}$) the Lie algebra of the left-invariant holomorphic (anti-holomorphic) vector fields on $G$.
As a real Lie algebra we have an isomorphism  $ \g_{+}\cong \g_{-}$ by  complex  conjugation.
Let $N$ be the nilradical  of $G$.
We can take a  simply connected complex nilpotent subgroup $C\subset G$  such that $G=C\cdot N$ (see \cite{dek}).
Since $C$ is nilpotent, the map
\[C\ni c \mapsto ({\rm Ad}_{c})_{s}\in {\rm Aut}(\g_{+})\]
is a homomorphism where $({\rm Ad}_{c})_{s}$ is the semi-simple part of ${\rm Ad}_{c}$.
Denote by $\bigwedge \g_{+}^{\ast}$ (resp. $\bigwedge \g_{-}^{\ast}$) the sub-DGA of $(A^{\ast,0}(G/\Gamma),\partial)$ (resp. $(A^{0,\ast}(G/\Gamma),\bar\partial)$ which consists of the left-invariant  holomorphic (anti-holomorphic) forms.
As a DGA, we have an isomorphism between $(\bigwedge \g_{+}^{\ast},\partial)$ and $(\bigwedge \g_{-}^{\ast},\bar\partial)$ given by  complex conjugation.

We have a basis $X_{1},\dots,X_{n}$ of $\g_{+}$ such that $({\rm Ad}_{c})_{s}={\rm diag} (\alpha_{1}(c),\dots,\alpha_{n}(c))$ for $c\in C$ where $\alpha_{1},\dots, \alpha_{n}$ are holomorphic characters.
Let $x_{1},\dots, x_{n}$ be the basis of $\g^{\ast}_{+}$ which is dual to $X_{1},\dots ,X_{n}$.
For a multi-index $I=\{i_{1},\dots ,i_{p}\}$ we write $x_{I}=x_{i_{1}}\wedge\dots \wedge x_{i_{p}}$,  and $\alpha_{I}=\alpha_{i_{1}}\cdots \alpha_{i_{p}}$.
\begin{theorem}{\rm (\cite[Corollary 6.2 and its proof]{KDD})}\label{MMTT}
Let $B^{\ast}_{\Gamma}$ be the subcomplex of $(A^{0,\ast}(G/\Gamma),\bar\partial) $ defined as
\[B^{\ast}_{\Gamma}=\left\langle \frac{\bar\alpha_{I}}{\alpha_{I} }\bar x_{I}{\Big \vert}\left(\frac{\bar\alpha_{I}}{\alpha_{I}}\right)_{ \vert_{\Gamma}}=1\right\rangle.
\]
Then the inclusion $B^{\ast}_{\Gamma}\subset A^{0,\ast}(G/\Gamma) $ induces an isomorphism
\[H^{\ast}(B^{\ast}_{\Gamma})\cong H^{0,\ast}(G/\Gamma).
\]
\end{theorem}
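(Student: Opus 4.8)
The plan is to prove the statement in two stages: first check that $B^{\ast}_{\Gamma}$ is genuinely a $\bar\partial$-stable subcomplex of $(A^{0,\ast}(G/\Gamma),\bar\partial)$, and then show that the inclusion is a quasi-isomorphism by passing to the semi-simple splitting of $G$ and reducing the computation to a completely solvable situation where invariant forms compute cohomology. First I would record why each generator $\frac{\bar\alpha_I}{\alpha_I}\bar x_I$ descends to the quotient: since every $\alpha_i\colon G\to\C^{\ast}$ is a holomorphic character, the function $\frac{\bar\alpha_I}{\alpha_I}$ is a \emph{unitary} character of $G$, and the hypothesis $(\frac{\bar\alpha_I}{\alpha_I})|_{\Gamma}=1$ says precisely that it is right-$\Gamma$-invariant, so multiplying the left-invariant form $\bar x_I$ by it gives a well-defined $(0,p)$-form on $G/\Gamma$.

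To see that $B^{\ast}_{\Gamma}$ is closed under $\bar\partial$, I would expand $\bar\partial\bigl(\frac{\bar\alpha_I}{\alpha_I}\bar x_I\bigr)$ by the Leibniz rule and keep track of weights. Because $X_1,\dots,X_n$ diagonalizes the semi-simple action with characters $\alpha_i$, a structure constant of $\g_{-}$ pairing $\bar x_J,\bar x_K$ into $\bar x_I$ can be nonzero only when $\alpha_J\alpha_K=\alpha_I$ and $\bar\alpha_J\bar\alpha_K=\bar\alpha_I$; hence each surviving summand of $\frac{\bar\alpha_I}{\alpha_I}\bar\partial\bar x_I$ is again of the form $\frac{\bar\alpha_L}{\alpha_L}\bar x_L$ with $(\frac{\bar\alpha_L}{\alpha_L})|_{\Gamma}=1$. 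The term $\bar\partial(\frac{\bar\alpha_I}{\alpha_I})\wedge\bar x_I$ obeys the same bookkeeping once one notes that $\frac1{\alpha_i}$ is holomorphic, so $\bar\partial(\frac{\bar\alpha_i}{\alpha_i})$ is $\frac{\bar\alpha_i}{\alpha_i}$ times a left-invariant $(0,1)$-form carrying the same weight. Thus the weights multiply and the triviality-on-$\Gamma$ condition propagates, so $B^{\ast}_{\Gamma}$ is a finite-dimensional subcomplex. For the quasi-isomorphism I would use the semi-simple splitting: since $c\mapsto(\mathrm{Ad}_c)_s$ is a homomorphism with diagonal characters $\alpha_i$, one embeds $G$ into $\tilde G=T\ltimes G$ with $T$ abelian acting through the semi-simple parts, chosen so that the twisted generators become genuinely left-$\tilde G$-invariant. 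On $\tilde G$ the residual action is unipotent, so a Nomizu/Hattori-type theorem applies and the invariant forms compute the cohomology; pulling back, the full Dolbeault complex organizes into a direct sum of weight pieces indexed by the unitary characters $\frac{\bar\alpha_I}{\alpha_I}$.

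The hard part will be showing that the weight pieces whose character $\frac{\bar\alpha_I}{\alpha_I}$ is \emph{nontrivial} on $\Gamma$ contribute nothing, since the noncompactness of $G$ rules out a naive averaging. I would handle this through the fibration $N/\Gamma^{\prime\prime}\hookrightarrow G/\Gamma\to C/(C\cap\Gamma)$: the characters $\frac{\bar\alpha_I}{\alpha_I}$ are trivial on $[C,C]$ and so factor through the complex torus coming from the abelianization of $C$ modulo $\Gamma$, and by a Leray/Künneth argument the nontrivial weight pieces appear as cohomology of that torus with values in a nontrivial flat unitary line bundle, which vanishes. Consequently the cohomology of the full $(0,\ast)$-complex is concentrated in the $\Gamma$-trivial weights, where it coincides with the cohomology of $B^{\ast}_{\Gamma}$; this identifies the surviving classes with exactly the generators of $B^{\ast}_{\Gamma}$ and yields the isomorphism $H^{\ast}(B^{\ast}_{\Gamma})\cong H^{0,\ast}(G/\Gamma)$.
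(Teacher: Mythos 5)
You should first be aware that the paper itself contains no proof of Theorem \ref{MMTT}: it is imported wholesale from \cite[Corollary 6.2 and its proof]{KDD}, whose argument rests on Kasuya's machinery of algebraic hulls and de Rham cohomology of solvmanifolds with values in rank-one local systems (Hattori--Mostow theory), plus a nontrivial bridge from that de Rham theory to the $\bar\partial$-complex. Your first stage is fine: each $\frac{\bar\alpha_{I}}{\alpha_{I}}\bar x_{I}$ descends to $G/\Gamma$ because the character is unitary and trivial on $\Gamma$, and the weight bookkeeping showing $\bar\partial$-stability of $B^{\ast}_{\Gamma}$ is correct. But this is the easy half already implicit in the statement.

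The gap is in your second stage, and it is the entire theorem. Your pivotal claim that ``the full Dolbeault complex organizes into a direct sum of weight pieces indexed by the unitary characters $\frac{\bar\alpha_{I}}{\alpha_{I}}$'' is false as stated: since $G/\Gamma$ is complex parallelizable, $A^{0,\ast}(G/\Gamma)\cong C^{\infty}(G/\Gamma)\otimes\bigwedge\g_{-}^{\ast}$, and only the finite-dimensional factor $\bigwedge\g_{-}^{\ast}$ carries a weight decomposition; the infinite-dimensional module $C^{\infty}(G/\Gamma)$ admits no direct-sum decomposition into character eigenspaces. Already for $G=\C^{n}$ (all $\alpha_{i}=1$) your claim amounts to saying that constant-coefficient forms compute the Dolbeault cohomology of a complex torus, which needs Hodge theory or Fourier analysis --- analytic input absent from your sketch, and this analytic (or algebraic-hull) input is precisely what \cite{KDD} supplies. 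The appeal to a ``Nomizu/Hattori-type theorem'' cannot repair this: those are theorems about de Rham cohomology of nilmanifolds, resp.\ completely solvable solvmanifolds, and no analogous invariance theorem holds for Dolbeault cohomology --- indeed the fact, stressed in this paper's introduction, that $H^{0,\ast}_{\bar\partial}(G/\Gamma)$ and $B^{\ast}_{\Gamma}$ genuinely depend on the choice of $\Gamma$ shows that no complex of forms defined independently of the lattice (such as left-invariant forms on $\tilde G=T\ltimes G$) can compute it. Finally, the fibration $N/\Gamma^{\prime\prime}\hookrightarrow G/\Gamma\to C/(C\cap\Gamma)$ you use for the vanishing step does not exist in this generality: $G=C\cdot N$ is not a semidirect product ($C\cap N$ may be nontrivial), $C\cap\Gamma$ need not be a lattice in $C$, and $\Gamma$ need not split; the Mostow fibration over the torus $G/N\Gamma$ does exist, but running a Leray/Borel-type spectral sequence for $\bar\partial$ there requires holomorphicity of the fibration and the fiberwise Dolbeault cohomology as a module over the base --- exactly the kind of information the theorem is meant to produce. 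The vanishing principle you invoke (cohomology twisted by a character nontrivial on $\Gamma$ dies) is sound and is indeed used in \cite{KDD}, but only after the reduction to a finite-dimensional complex has been achieved by entirely different means.
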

By this theorem, since $G/\Gamma$ is complex parallelizable, for the DBA $(\bigwedge \g_{+}^{\ast}\otimes B^{\ast}_{\Gamma},\bar\partial)$, the inclusion $\bigwedge \g^{\ast}\otimes B^{\ast}_{\Gamma}\subset A^{\ast,\ast}$ induce an isomorphism
\[\bigwedge \g^{\ast}_{+}\otimes H^{\ast}(B^{\ast}_{\Gamma})\cong H^{\ast,\ast}(G/\Gamma).\]
For a holomorphic character $\nu$, we define the subspaces
\[V_{\nu}=\left\langle x_{I}{\Big \vert} \alpha_{I}=\frac{1}{\nu}\right\rangle\]
and
\[V_{\bar \nu}=\left\langle\bar x_{I}{\Big \vert} \alpha_{I}=\frac{1}{\nu}\right\rangle\]
of $\bigwedge \g^{\ast}_{+}$ and $\bigwedge \g^{\ast}_{-}$ respectively.
We consider the weight decomposition
\[\bigwedge \g^{\ast}_{+}=\bigoplus V_{\nu_{k}}\]
and
\[\bigwedge \g^{\ast}_{-}=\bigoplus V_{\bar\nu_{k}}.\]
Then we have 
\[B^{\ast}_{\Gamma}=\bigoplus_{\left( \frac{\nu_{k}}{\bar\nu_{k} }\right)_{ \vert_{\Gamma}}=1}  \frac{\nu_{k}}{\bar\nu_{k} }V_{\bar\nu_{k}} \]
and hence 
\[\bigwedge \g^{\ast}_{+}\otimes B^{\ast}_{\Gamma}=\bigoplus_{\left( \frac{\nu_{k}}{\bar\nu_{k} }\right)_{ \vert_{\Gamma}}=1} \left( \nu_{k}\bigwedge \g_{+}^{\ast} \right)\otimes \left(\frac{1}{\bar\nu_{k} }V_{\bar\nu_{k}} \right).\]
Regard $\bigwedge \g^{\ast}_{+}\otimes B^{\ast}_{\Gamma}$ as a BBA $(\bigwedge \g^{\ast}_{+}\otimes B^{\ast}_{\Gamma},\partial,\bar\partial)$.
Since each $\nu_{k}$ is holomorphic, we have $\bar\partial\left( \nu_{k}\bigwedge \g_{+}^{\ast} \right)=0$ and $\partial \left(\frac{1}{\bar\nu_{k} }V_{\bar\nu_{k}} \right)=0$.
Hence for each $\nu_{k}$ the double complex  $\left( \nu_{k}\bigwedge \g_{+}^{\ast} \right)\otimes \left(\frac{1}{\bar\nu_{k} }V_{\bar\nu_{k}} \right)$ is the tensor product of $\left(\nu_{k}\bigwedge \g^{\ast}_{+}, \partial\right)$ and $\left(\frac{1}{\bar\nu_{k} }V_{\bar\nu_{k}},\bar\partial\right)$.
We consider the spectral sequence $E_{\ast}^{\ast,\ast}\left(\bigwedge \g^{\ast}_{+}\otimes B^{\ast}_{\Gamma}\right)$ of the double complex $\bigwedge \g^{\ast}_{+}\otimes B^{\ast}_{\Gamma}$.
Then we have
\[E_{2}^{\ast,\ast}\left(\bigwedge \g^{\ast}_{+}\otimes B^{\ast}_{\Gamma}\right)=\bigoplus_{\left( \frac{\nu_{k}}{\bar\nu_{k} }\right)_{ \vert_{\Gamma}}=1}  H^{\ast}_{\partial} \left( \nu_{k}\bigwedge \g_{+}^{\ast} \right)\otimes H^{\ast}_{\bar\partial}\left(\frac{1}{\bar\nu_{k} }V_{\bar\nu_{k}} \right)
\]
By the K\"unneth theorem (see \cite{ma}), we have
\[H^{\ast}_{d}\left(\bigwedge \g^{\ast}_{+}\otimes B^{\ast}_{\Gamma}\right)=\bigoplus_{\left( \frac{\nu_{k}}{\bar\nu_{k} }\right)_{ \vert_{\Gamma}}=1} {\rm Tot}\left( H^{\ast}_{\partial} \left( \nu_{k}\bigwedge \g_{+}^{\ast} \right)\otimes H^{\ast}_{\bar\partial}\left(\frac{1}{\bar\nu_{k} }V_{\bar\nu_{k}} \right)\right).
\]
Hence the spectral  sequence $E_{\ast}^{\ast,\ast}\left(\bigwedge \g^{\ast}_{+}\otimes B^{\ast}_{\Gamma}\right)$ is degenerate at $E_{2}$-term.
By Theorem \ref{MMTT} and \cite[Theorem 3.5]{mc}), for the Fr\"olicher spectral sequence $E_{\ast}^{\ast,\ast}(G/\Gamma)$
we have 
$E_{2}^{\ast,\ast}\left(\bigwedge \g^{\ast}_{+}\otimes B^{\ast}_{\Gamma}\right)\cong E_{2}^{\ast,\ast}(G/\Gamma)$.
Hence Theorem  \ref{cos} follows.

\section{Proof of Theorem \ref{sps}}\label{th2}
Let $G$ be a Lie-group as in Assumption \ref{Ass}.
Consider the decomposition $\n_{\C}=\n^{1,0}\oplus \n^{0,1}$.
By the condition (2), this decomposition is a direct sum of $\C^{n}$-modules.
By the condition (3) we have a basis $Y_{1},\dots ,Y_{m}$ of $\n^{1,0}$ such that the action of $\C^{n}$ on $\n^{1,0}$ is represented by
$\phi(t)={\rm diag} (\alpha_{1}(t),\dots, \alpha_{m} (t))$ where $\alpha_{1}(t),\dots, \alpha_{m} (t)$ are characters of $\C^{n}$.
Since $Y_{j}$ is a left-invariant vector field on $N$,
for $(t,x)\in \C^{n}\ltimes _{\phi} N$, we have
\[\alpha_{j}(t)(Y_{j})_{x}=L_{(t,x)} \left(\alpha_{j}(0)(Y_{j})_{e}\right)
\] where $L_{(t,x)} $ is the left-action
and hence
the vector field $\alpha_{j}Y_{j}$ on $\C^{n}\ltimes _{\phi} N$ is  left-invariant.
Hence we have a basis $X_{1},\dots,X_{n}, \alpha_{1}Y_{1},\dots ,\alpha_{m}Y_{m}$ of $\g^{1,0}$.
Let $x_{1},\dots,x_{n}, \alpha^{-1}_{1}y_{1},\dots ,\alpha_{m}^{-1}y_{m}$ be the  basis of $\bigwedge^{1,0}\g^{\ast}$ which is dual to $X_{1},\dots,X_{n}, \alpha_{1}Y_{1},\dots ,\alpha_{m}Y_{m}$.
Then we have 
\[\bigwedge ^{p,q}\g^{\ast}=\bigwedge ^{p}\langle x_{1},\dots ,x_{n}, \alpha^{-1}_{1}y_{1},\dots ,\alpha^{-1}_{m}y_{m}\rangle\otimes \bigwedge ^{q}\langle \bar x_{1},\dots ,\bar x_{n}, \bar\alpha^{-1}_{1}\bar y_{1},\dots ,\bar\alpha^{-1}_{m}\bar y_{m}\rangle.
\]


\begin{remark}\label{nodd}
We also consider the direct sum $\C^{n}\oplus \n$.
Then we have
\[\bigwedge ^{p,q} (\C^{n}\oplus \n)_{\C}^{\ast}=\bigwedge ^{p}\langle x_{1},\dots ,x_{n}, y_{1},\dots ,y_{m}\rangle\otimes \bigwedge ^{q}\langle \bar x_{1},\dots ,\bar x_{n}, \bar y_{1},\dots ,\bar y_{m}\rangle.
\]
The space $A^{\ast,\ast}(G/\Gamma)$ of differential forms on $G/\Gamma$ is identified with the space of $\Gamma$-invariant differential forms on the Lie-group $\C^{n}\ltimes _{\phi} N$.

All the spaces of forms which we consider in this section can be regarded as  subspaces in the space of differential forms on the complex manifold $\C^{n}\times  N$.
In particular we do not need to specify the differentials in the double complexes.

\end{remark}

By \cite[Lemma 2.2]{Kd}, 
 for each $i$ we can take a unique pair of unitary characters $\beta_{i}$ and $\gamma_{i}$ on $\C^{n}$ such that $\alpha_{i}\beta_{i}^{-1}$ and $\bar\alpha\gamma^{-1}_{i}$ are holomorphic.

\begin{theorem}{\rm (\cite[Corollary 4.2]{Kd})}\label{CORR}
Let $G, \Gamma$ be as above.
We 
define   the sub-DBA $B^{\ast,\ast}$ of $A^{\ast,\ast}(G/\Gamma)$ given by
\[B^{p,q}=\left\langle x_{I}\wedge \alpha^{-1}_{J}\beta_{J}y_{J}\wedge \bar x_{K}\wedge \bar \alpha^{-1}_{L}\gamma_{L}\bar y_{L}{\Big \vert} \begin{array}{cc}\vert I\vert+\vert K\vert=p,\, \vert J\vert+\vert L\vert=q \\  {\rm the \, \,  restriction \, \,  of }\, \, \beta_{J}\gamma_{L}\, \,  {\rm on  \, \, \Gamma \, \, is\, \,  trivial}\end{array}\right\rangle,
\]
 the inclusion $B^{\ast,\ast}\subset A^{\ast,\ast}(G/\Gamma)$ induces a cohomology isomorphism
\[H^{\ast,\ast}_{\bar \partial}(B^{\ast,\ast})\cong H^{\ast,\ast}_{\bar \partial}(G/\Gamma).
\]
\end{theorem}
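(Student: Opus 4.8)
The plan is to exhibit $G/\Gamma$ as the total space of a holomorphic fibration $N/\Gamma^{\prime\prime}\to G/\Gamma\to T$ over the complex torus $T=\C^{n}/\Gamma^{\prime}$, to reduce the $\bar\partial$-cohomology first along the fiber and then along the base, and to keep track of which twisted-invariant forms survive. As in Remark \ref{nodd}, I identify $A^{\ast,\ast}(G/\Gamma)$ with the $\Gamma$-invariant forms on $\C^{n}\ltimes_{\phi}N$. The guiding template is the one behind Theorem \ref{MMTT}: the surviving cohomology is carried by invariant forms modulated by a unitary character, and one keeps exactly those whose modulating character is trivial on $\Gamma$.

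First I would reduce along the fiber. Let $\tilde A^{\ast,\ast}\subseteq A^{\ast,\ast}(G/\Gamma)$ be the subcomplex of forms whose restriction to each fiber of $G/\Gamma\to T$ lies in the space $\bigwedge^{\ast,\ast}\n^{\ast}$ of left-invariant forms on $N/\Gamma^{\prime\prime}$. Condition (5) of Assumption \ref{Ass} says precisely that $\bigwedge^{\ast,\ast}\n^{\ast}\hookrightarrow A^{\ast,\ast}(N/\Gamma^{\prime\prime})$ is a $\bar\partial$-quasi-isomorphism on the fiber, and condition (2) makes the fibration holomorphic so that this reduction is compatible with $\bar\partial$. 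Comparing the two Borel spectral sequences of $N/\Gamma^{\prime\prime}\to G/\Gamma\to T$ (or running a direct fiberwise argument) then upgrades the fiberwise statement to a $\bar\partial$-quasi-isomorphism $\tilde A^{\ast,\ast}\hookrightarrow A^{\ast,\ast}(G/\Gamma)$.

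Next I would decompose along the base. Since $\phi$ is semisimple and acts holomorphically (conditions (2) and (3)), the $\C^{n}$-module $\bigwedge^{\ast,\ast}\n^{\ast}$ splits into weight spaces, the weight of $y_{J}\wedge\bar y_{L}$ being the character $\alpha^{-1}_{J}\bar\alpha^{-1}_{L}$; because the action commutes with $\bar\partial$, this decomposition is $\bar\partial$-stable. On each summand $\tilde A^{\ast,\ast}$ becomes the $\bar\partial$-complex of $T$ with coefficients in the flat line bundle attached to that character. Using the splitting of \cite[Lemma 2.2]{Kd} I write each $\alpha_{j}$ and $\bar\alpha_{j}$ as a unitary part ($\beta_{j}$, resp. $\gamma_{j}$) times a holomorphic part ($\alpha_{j}\beta_{j}^{-1}$, resp. $\bar\alpha_{j}\gamma_{j}^{-1}$). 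The holomorphic part is absorbed by a nowhere-vanishing holomorphic change of frame, which is a $\bar\partial$-isomorphism of line bundles, leaving only the unitary character $\beta_{J}\gamma_{L}$.

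It then remains to compute the $\bar\partial$-cohomology of $T$ with coefficients in the flat unitary line bundle attached to $\beta_{J}\gamma_{L}$. By harmonic (Fourier) analysis on the torus, this cohomology vanishes unless $\beta_{J}\gamma_{L}$ is trivial on $\Gamma^{\prime}$, equivalently on $\Gamma$, in which case the bundle is trivial and the cohomology is computed by the translation-invariant forms in $x_{i},\bar x_{i}$. Assembling the summands, the classes that survive are exactly those spanned by $x_{I}\wedge\alpha^{-1}_{J}\beta_{J}y_{J}\wedge\bar x_{K}\wedge\bar\alpha^{-1}_{L}\gamma_{L}\bar y_{L}$ with $\beta_{J}\gamma_{L}\vert_{\Gamma}$ trivial, which is the complex $B^{\ast,\ast}$; checking that these representatives are $\bar\partial$-closed and linearly independent gives the stated isomorphism. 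The main obstacle is this base step together with the justification that it may be run summand by summand: one must show that the non-unitary holomorphic twisting is genuinely removable, and that the weight decomposition, which rests on the noncompact semisimple action rather than on compact-group averaging, is well-behaved and compatible with the fiber reduction of the second paragraph. That fiber reduction, namely the spectral-sequence comparison built on condition (5), is the other delicate point.
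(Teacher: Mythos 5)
The statement you were asked to prove is not actually proved in this paper at all: Theorem \ref{CORR} is imported verbatim from \cite[Corollary 4.2]{Kd}, so the only possible comparison is with that source. Your outline is, in essence, a reconstruction of the strategy used there, and its architecture is sound: realize $G/\Gamma$ as a holomorphic fiber bundle over the torus $T=\C^{n}/\Gamma^{\prime}$ with fiber $N/\Gamma^{\prime\prime}$ (conditions (1), (2), (4) of Assumption \ref{Ass}); cut the fiber down to left-invariant forms using condition (5); decompose the resulting complex of bundle-valued forms by the weights of the semisimple action (condition (3)); split each weight character into a unitary times a holomorphic character by \cite[Lemma 2.2]{Kd}, absorbing the holomorphic part into the frame --- this is exactly the role of the factors $\alpha^{-1}_{J}\beta_{J}$ and $\bar\alpha^{-1}_{L}\gamma_{L}$ in the definition of $B^{p,q}$ --- and kill every summand whose unitary character $\beta_{J}\gamma_{L}$ is nontrivial on $\Gamma^{\prime}$ (equivalently on $\Gamma$) by the vanishing of the Dolbeault cohomology of a nontrivial flat unitary line bundle over a complex torus.

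Two points in your write-up need repair. First, you cannot literally invoke ``the Borel spectral sequences'': Borel's theorem (appendix to \cite{Hir}) assumes a K\"ahler fiber, and the nilmanifold $N/\Gamma^{\prime\prime}$ need not be K\"ahler. What is needed is the elementary filtered-complex comparison: filter both your $\tilde A^{\ast,\ast}$ and $A^{\ast,\ast}(G/\Gamma)$ by base degree; condition (5), promoted to the flat bundle of fiberwise invariant forms (which is legitimate because the $\phi(t)$ are holomorphic automorphisms and hence act on $\bigwedge^{\ast,\ast}\n^{\ast}$), gives an isomorphism on the first page, hence on the abutment --- your parenthetical ``direct fiberwise argument'' has to carry this weight. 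Second, your concluding sentence --- ``checking that these representatives are $\bar\partial$-closed'' --- is false and conflicts with the very structure of the statement: the generators of $B^{\ast,\ast}$ are \emph{not} $\bar\partial$-closed, since $B^{\ast,\ast}$ is a subcomplex with genuinely nonzero differential (see the Remark following Theorem \ref{CORR}: $\bar\partial$ of a generator involves $\bar\partial(y_{J})$ and $\bar\partial(\bar y_{L})$, which are nonzero as soon as $N$ is nonabelian, e.g. $\bar\partial y_{2}=y_{1}\wedge\bar y_{1}$ in the paper's Example). The correct final step is not to exhibit a spanning set of closed forms but to observe that, on each surviving weight summand, the inclusion of the finite-dimensional tensor complex $\bigwedge\langle x_{i},\bar x_{i}\rangle\otimes\bigl(\lambda_{i}\mu_{i}^{-1}A_{\mu_{i}}\bigr)$ into the corresponding complex of bundle-valued forms on $T$ is a quasi-isomorphism (K\"unneth, together with the fact that invariant forms compute $H^{\ast,\ast}_{\bar\partial}(T)$), so that the conclusion is a quasi-isomorphism of complexes $B^{\ast,\ast}\hookrightarrow A^{\ast,\ast}(G/\Gamma)$, with the cohomology then still to be computed inside $B^{\ast,\ast}$ using its nontrivial differential.
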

\begin{remark}
For $B^{p,q}\ni x_{I}\wedge \alpha^{-1}_{J}\beta_{J}y_{J}\wedge \bar x_{K}\wedge \bar \alpha^{-1}_{L}\gamma_{L}\bar y_{L}$, since $\alpha^{-1}_{J}\beta_{J}$ and $\bar \alpha^{-1}_{L}\gamma_{L}$ are holomorphic,    we have
\begin{multline*}
\bar\partial\left(x_{I}\wedge \alpha^{-1}_{J}\beta_{J}y_{J}\wedge \bar x_{K}\wedge \bar \alpha^{-1}_{L}\gamma_{L}\bar y_{L}\right)\\
=(-1)^{\vert I\vert}x_{I}\wedge \alpha^{-1}_{J}\beta_{J}\bar\partial(y_{J})\wedge \bar x_{K}\wedge \bar \alpha^{-1}_{L}\gamma_{L}\bar y_{L}+(-1)^{\vert I\vert+\vert J\vert+\vert K\vert}x_{I}\wedge \alpha^{-1}_{J}\beta_{J}\bar y_{J}\wedge \bar x_{K}\wedge \bar \alpha^{-1}_{L}\gamma_{L}\bar\partial(\bar y_{L}).
\end{multline*}
Hence by the injection
\[B^{p,q}\ni x_{I}\wedge \alpha^{-1}_{J}\beta_{J}y_{J}\wedge \bar x_{K}\wedge \bar \alpha^{-1}_{L}\gamma_{L}\bar y_{L}\mapsto x_{I}\wedge y_{J}\wedge \bar x_{K}\wedge \bar y_{L} \in \bigwedge ^{p,q} (\C^{n}\oplus \n)_{\C}^{\ast},
\]
 $(B^{\ast,\ast},\bar\partial)$ can be regarded as a sub-DBA of the DBA $(\bigwedge ^{\ast,\ast} (\C^{n}\oplus \n)_{\C}^{\ast},\bar\partial)$.
However, considering the differential $\partial$, $(B^{\ast,\ast}, \partial, \bar\partial)$  is not a sub-BBA of the BBA $(\bigwedge ^{\ast,\ast} (\C^{n}\oplus \n)_{\C}^{\ast}, \partial, \bar\partial)$
\end{remark}

The action $\phi$ induces the action of $\C^{n}$ on the BBA $\left(\bigwedge ^{\ast,\ast} \n^{\ast}_{\C}, \partial,\bar\partial\right)$.
We consider the weight decomposition
\[\bigwedge ^{\ast,\ast} \n^{\ast}_{\C}=\bigoplus_{\mu_{i}}A_{\mu_{i}}\]
of this action.
Then each $A_{\mu_{i}}$ is a sub-Double-complex of $\bigwedge ^{\ast,\ast} \n^{\ast}_{\C}$.
Let $B^{\ast,\ast}$ be the DBA as in Theorem \ref{CORR}.
Then we have
\[B^{\ast,\ast}=\bigoplus_{\mu_{i}, (\lambda_{i})_{\vert_{\Gamma}}=1}\bigwedge \C^{n}\otimes \left(\lambda_{i}\mu_{i}^{-1}A_{\mu_{i}}\right)\]
where $\lambda_{i} $ are unique unitary characters such that $\lambda_{i}\mu_{i}^{-1}$ are holomorphic.
If $(\mu_{i})_{\vert_{\Gamma}}=1$, then $\mu_{i}$ is unitary and so $\lambda_{i}=\mu_{i}$.
If $(\mu_{i})_{\vert_{\Gamma}}\not=1$ and $(\lambda_{i})_{\vert_{\Gamma}}=1$, then $\lambda_{i}\mu_{i}^{-1}$ is a non-trivial holomorphic character.
Consider the double-complexes
\[C^{\ast,\ast}=\bigoplus_{(\mu_{i})_{\vert_{\Gamma}}=1}\bigwedge \C^{n}\otimes A_{\mu_{i}} \]
and
\[D^{\ast,\ast}=\bigoplus_{(\mu_{i})_{\vert_{\Gamma}}\not=1,\, (\lambda_{i})_{\vert_{\Gamma}}=1}\left(\lambda_{i}\mu_{i}^{-1}\bigwedge \C^{n}\right)\otimes A_{\mu_{i}}.\]
Then we have
\[B^{\ast,\ast} =C^{\ast,\ast}\oplus D^{\ast,\ast}.\]
Consider the spectral sequences $E^{\ast,\ast}_{\ast}(B)$, $E^{\ast,\ast}_{\ast}(C)$ and $E^{\ast,\ast}_{\ast}(D)$ of the double complexes  $B^{\ast,\ast}$, $C^{\ast,\ast}$ and $D^{\ast,\ast}$ respectively.
Then we have:

\begin{lemma}\label{e2e}
\[E_{2}^{\ast,\ast}(B)=E_{2}^{\ast,\ast}(C).\]
\end{lemma}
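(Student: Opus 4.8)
The plan is to exploit the direct sum decomposition $B^{\ast,\ast}=C^{\ast,\ast}\oplus D^{\ast,\ast}$. Since the $\C^{n}$-action commutes with both $\partial$ and $\bar\partial$, these differentials preserve the weight decomposition and hence the splitting into $C^{\ast,\ast}$ and $D^{\ast,\ast}$ (the two summands are distinguished only by whether $(\mu_{i})_{\vert_{\Gamma}}=1$, a condition visibly preserved under a weight-preserving operator). Thus $C^{\ast,\ast}$ and $D^{\ast,\ast}$ are sub-double-complexes of $B^{\ast,\ast}$, and the spectral sequence splits as $E_{r}^{\ast,\ast}(B)=E_{r}^{\ast,\ast}(C)\oplus E_{r}^{\ast,\ast}(D)$ for every $r$. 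Therefore the lemma is equivalent to the vanishing $E_{2}^{\ast,\ast}(D)=0$, and this is what I would prove.

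Next I would analyze the double-complex structure of each summand of $D^{\ast,\ast}$. Fix a weight $\mu_{i}$ with $(\mu_{i})_{\vert_{\Gamma}}\neq 1$ and $(\lambda_{i})_{\vert_{\Gamma}}=1$, and write $\chi=\lambda_{i}\mu_{i}^{-1}$, which is a nontrivial holomorphic character of $\C^{n}$. The corresponding summand is $(\chi\bigwedge\C^{n})\otimes A_{\mu_{i}}$. Using that $\chi$ and the forms $x_{I},\bar x_{K}$ are $\bar\partial$-closed, together with the cancellations already recorded in the Remark following Theorem \ref{CORR} (by which $\bar\partial$ of an element of $B^{\ast,\ast}$ involves no mixed $\bar x\wedge y$ terms), one checks that $\bar\partial$ acts only on the $A_{\mu_{i}}$-factor. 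Likewise $\partial$ splits as the tensor-product differential induced by the operator $\omega\mapsto\ell\wedge\omega$ on the $\C^{n}$-factor and by $\partial$ on $A_{\mu_{i}}$, where $\ell$ is the $(1,0)$-form on $\C^{n}$ determined by $\partial\chi=\chi\,\ell$; since $\chi$ is a nontrivial holomorphic character, $\ell\neq 0$. In other words each summand is the tensor product of the double complex $\chi\bigwedge\C^{n}$ (with $\bar\partial=0$ and $\partial=\ell\wedge(\,\cdot\,)$) with the double complex $A_{\mu_{i}}$.

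It then remains to run the spectral sequence of this tensor product. Because $\bar\partial$ vanishes on the $\C^{n}$-factor, the K\"unneth theorem gives $E_{1}^{\ast,\ast}=(\chi\bigwedge\C^{n})\otimes H_{\bar\partial}^{\ast,\ast}(A_{\mu_{i}})$, and $d_{1}$ is the tensor-product differential induced by $\omega\mapsto\ell\wedge\omega$ on the first factor and by the $\partial$ induced on $H_{\bar\partial}(A_{\mu_{i}})$ on the second. The point is that $(\chi\bigwedge\C^{n},\,\ell\wedge(\,\cdot\,))$ is a Koszul complex for the nonzero form $\ell$, and is therefore acyclic: contraction with a vector dual to $\ell$ is a contracting homotopy. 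The tensor product over $\C$ of an acyclic complex with any complex is again acyclic, so $E_{2}^{\ast,\ast}=H(E_{1}^{\ast,\ast},d_{1})=0$ for this summand. Summing over all summands yields $E_{2}^{\ast,\ast}(D)=0$, and hence $E_{2}^{\ast,\ast}(B)=E_{2}^{\ast,\ast}(C)$.

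The main obstacle is the second step: verifying that on $D^{\ast,\ast}$ the two differentials decompose cleanly as a tensor product, with $\bar\partial$ confined to the nilpotent factor and $\partial$ contributing exactly the Koszul operator $\ell\wedge(\,\cdot\,)$ on the abelian factor with $\ell\neq 0$. This is precisely where the non-holomorphy of the original characters $\alpha_{j}$ must be absorbed by the unitary characters $\beta_{j},\gamma_{j}$ --- the same mechanism that makes $B^{\ast,\ast}$ a sub-DBA for $\bar\partial$ in the Remark --- and one must confirm that no residual mixed terms survive and that the holomorphic character $\chi=\lambda_{i}\mu_{i}^{-1}$ genuinely contributes the nonzero form $\ell=\partial\log\chi$. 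Once the tensor-product structure and the non-vanishing of $\ell$ are in hand, the Koszul acyclicity is the clean conceptual core, and the vanishing $E_{2}^{\ast,\ast}(D)=0$ follows at once.
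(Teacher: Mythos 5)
Your proof is correct and follows essentially the same route as the paper: reduce the lemma to $E_{2}^{\ast,\ast}(D)=0$ via the direct-sum splitting, identify each summand of $D^{\ast,\ast}$ as a tensor product in which $\bar\partial$ acts only on the $A_{\mu_{i}}$-factor, and then kill $E_{2}$ by the vanishing of the cohomology of $\left(\lambda_{i}\mu_{i}^{-1}\bigwedge \C^{n},\partial\right)$ combined with the K\"unneth theorem. The only cosmetic difference is that you prove this vanishing directly via the Koszul contracting homotopy $\iota_{v}$ with $\ell(v)=1$, whereas the paper cites the same fact as the vanishing of the Lie algebra cohomology of the abelian Lie algebra $\C^{n}$ with values in a non-trivial one-dimensional representation.
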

\begin{proof} 
We have
\[E_{2}^{\ast,\ast}(D)=\bigoplus_{(\mu_{i})_{\vert_{\Gamma}}\not=1,\, (\lambda_{i})_{\vert_{\Gamma}}=1}H^{\ast,\ast}_{\partial}\left(\left(\lambda_{i}\mu_{i}^{-1}\bigwedge \C^{n}\right)\otimes H^{\ast,\ast}_{\bar\partial}\left(A_{\mu_{i}}\right)\right).\]
By the K\"unneth theorem (see \cite{ma}), we have
\[
H^{\ast,\ast}_{\partial}\left(\left(\lambda_{i}\mu_{i}^{-1}\bigwedge \C^{n}\right)\otimes H^{\ast,\ast}_{\bar\partial}\left(A_{\mu_{i}}\right)\right)=H^{\ast,\ast}_{\partial}\left(\lambda_{i}\mu_{i}^{-1}\bigwedge \C^{n}\right)\otimes H^{\ast,\ast}_{\partial}\left(H^{\ast,\ast}_{\bar\partial}\left(A_{\mu_{i}}\right)\right).\]
If $\lambda_{i}\mu_{i}^{-1}$ is a non-trivial holomorphic character,  then the cohomology $H^{\ast,\ast}_{\partial}\left(\lambda_{i}\mu_{i}^{-1}\bigwedge \C^{n}\right)$ is identified with the Lie algebra cohomology   of the Abelian Lie algebra $\C^{n}$ with values in a non-trivial $1$-dimensional representation and hence we have $H^{\ast,\ast}_{\partial}\left(\lambda_{i}\mu_{i}^{-1}\bigwedge \C^{n}\right)=0$.
This implies 
\[E_{2}^{\ast,\ast}(D)=\bigoplus_{(\mu_{i})_{\vert_{\Gamma}}\not=1,\, (\lambda_{i})_{\vert_{\Gamma}}=1}H^{\ast,\ast}_{\partial}\left(\lambda_{i}\mu_{i}^{-1}\bigwedge \C^{n}\right)\otimes H^{\ast,\ast}_{\partial}\left(H^{\ast,\ast}_{\bar\partial}\left(A_{\mu_{i}}\right)\right)=0.\]
Hence the lemma follows.
\end{proof}


Since we have 
\[C^{p,q}=\left\langle x_{I}\wedge y_{J}\wedge \bar x_{K}\wedge \bar y_{L}{\Big \vert} \begin{array}{cc}\vert I\vert+\vert K\vert=p,\, \vert J\vert+\vert L\vert=q \\  {\rm the \, \,  restriction \, \,  of }\, \, \alpha^{-1}_{J} \bar \alpha^{-1}_{L}\, \,  {\rm on  \, \, \Gamma \, \, is\, \,  trivial}\end{array}\right\rangle,
\]
we have  $C^{\ast,\ast}\subset \bigwedge (\C^{n}\oplus \n)_{\C}^{\ast}$ by Remark \ref{nodd}, 
$C^{\ast,\ast}$ is closed under wedge product and we have $\bar\ast(C^{\ast,\ast})\subset C^{\ast,\ast}$ where $\bar\ast$ is the Hodge star operator of the left-invariant Hermitian metric
\[x_{1}\bar x_{1}+\dots +x_{n}\bar x_{n}+y_{1}\bar y_{1}+\dots +y_{m}\bar y_{m}\]
on $\C^{n}\times N$.
Hence ${\rm Tot}C^{\ast,\ast}$ is a finite dimensional DGA of PD-type.
By Proposition \ref{fispe}, we have an inclusion $E^{\ast,\ast}_{r}(C)\hookrightarrow E^{\ast,\ast}_{r}(\bigwedge (\C^{n}\oplus \n)_{\C}^{\ast})$.
Hence if the spectral sequence $E^{\ast,\ast}_{\ast}(\bigwedge (\C^{n}\oplus \n)_{\C}^{\ast})$ is degenerate at $E_{r}$-term, then  the spectral sequence $E^{\ast,\ast}_{\ast}(C)$ is also degenerate at $E_{r}$-term.
By the condition (5) in Assumption \ref{Ass}, the spectral sequence $E^{\ast,\ast}_{\ast}(\bigwedge (\C^{n}\oplus \n)_{\C}^{\ast})$ is degenerate at $E_{r}$-term if and only if the Fr\"olicher spectral sequence $E_{\ast}^{\ast,\ast}(N/\Gamma^{\prime\prime}) $ of the nilmanifold $N/\Gamma^{\prime\prime}$ is  degenerate at $E_{r}$-term.
By Theorem \ref{CORR} and Lemma \ref{e2e} and \cite[Theorem 3.5]{mc}, we have
\[E^{\ast,\ast}_{r}(G/\Gamma)\cong E^{\ast,\ast}_{r}(B)=E^{\ast,\ast}_{r}(C)\]
for $r\ge2$.
Hence Theorem \ref{sps} follows.

\section{Examples and  remarks}
In this section we show that Theorem \ref{sps} is sharp, that is:
\begin{remark}\label{rre}
We can construct pairs $(G,\Gamma)$ as in Assumption \ref{Ass} such that:

$\bullet$ 
 $r(G/\Gamma)=2$ but $r(N/\Gamma^{\prime\prime})=1$

$\bullet$ 
 $r(G/\Gamma)< r(N/\Gamma^{\prime\prime})$.
\end{remark}

\begin{example}
Let $G=\C\ltimes _{\phi}\C^{2}$ such that $\phi(x+\sqrt{-1}y)=\left(
\begin{array}{cc}
e^{x}& 0  \\
0&    e^{-x}  
\end{array}
\right)$.
Then for some $a\in \R$  the matrix $\left(
\begin{array}{cc}
e^{a}& 0  \\
0&    e^{-a}  
\end{array}
\right)$
 is conjugate to an element of $SL(2,\Z)$.
 Hence for any $0\not=b\in \R$ we have a lattice $\Gamma=(a\Z+b\sqrt{-1}\Z )\ltimes \Gamma^{\prime\prime}$ such that $\Gamma^{\prime\prime} $ is a lattice of $\C^{2}$.

If $b=2\pi$, then the Dolbeault cohomology $H^{\ast,\ast}(G/\Gamma)$ is isomorphic to the Dolbeault cohomology of the complex $3$-torus (see \cite{Kd}).
In this case we have 
\[\dim H^{\ast}(G/\Gamma)=2<6=\dim H^{1,0}_{\bar\partial}(G/\Gamma)+\dim H^{1,0}_{\bar\partial}(G/\Gamma).\]
This implies $r(G/\Gamma)>1$.
Hence the first assertion of Remark \ref{rre} follows.
\begin{remark}
In \cite{KH}, in case $N$ is Abelian we give a condition for $r(G/\Gamma)=1$.
\end{remark}
\end{example}

\begin{example}
Let $G=\C\ltimes_{\phi} N$ such that 
\[N=\left\{\left(
\begin{array}{cccc}
1&  \bar z&  \frac{1}{2}\bar z^{2}&w  \\
0&     1&\bar z& v   \\
0& 0&1 &z \\
0&0&0&1
\end{array}
\right): z,v,w\in \C\right\}\]
and $\phi$ is given by
\[\phi(s+\sqrt{-1}t)\left(
\begin{array}{cccc}
1&  \bar z&  \frac{1}{2}\bar z^{2}&w  \\
0&     1&\bar z& v   \\
0& 0&1 &z \\
0&0&0&1
\end{array}
\right)=
\left(
\begin{array}{cccc}
1& e^{-\pi \sqrt{-1} t}\bar z&  \frac{1}{2} e^{-2\pi \sqrt{-1} t}\bar z^{2}&e^{-\pi  \sqrt{-1} t} w  \\
0&     1&e^{-\pi  \sqrt{-1}t} \bar z& v   \\
0& 0&1 &e^{\pi  \sqrt{-1}t}z \\
0&0&0&1
\end{array}
\right).
\]
We have a lattice 
\[\Gamma=(\Z+\sqrt{-1}\Z)\ltimes \Gamma^{\prime\prime}
\]
such that
\[\Gamma^{\prime\prime}=N\cap GL(4, \Z[\sqrt{-1}]).
\]
Then $G$ is a Lie-group as Assumption \ref{Ass} and we have
$x_{1}=ds+\sqrt{-1}t$, $y_{1}=dz$, $y_{2}=dv-\bar z dz$ and $y_{3}=dw+\bar z dv -\frac{1}{2}\bar z^{2}dz$ where $x_{i}, y_{i}$ are as Section \ref{th2}.
We have $dy_{1}=0$, $dy_{2}=y_{1}\wedge \bar y_{1}$, and $dy_{3}=\bar y_{1}\wedge y_{2}$.
It is known that $r(N/\Gamma^{\prime\prime})\ge 3$ (see \cite[Example 1]{CFGU}).
The second assertion of Remark \ref{rre} follows from the following proposition.
\begin{proposition}
We have $r(G/\Gamma)=1$.
\end{proposition}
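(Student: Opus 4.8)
The plan is to push the example through the finite-dimensional model of Theorem~\ref{CORR} and reduce degeneration at $E_{1}$ to a concrete finite cohomology computation. First I would read off the weights: from the displayed $\phi(s+\sqrt{-1}t)$ the induced action on $\n^{1,0}=\langle Y_{1},Y_{2},Y_{3}\rangle$ is diagonal with $\alpha_{1}=e^{\pi\sqrt{-1}t}$, $\alpha_{2}=1$, $\alpha_{3}=e^{-\pi\sqrt{-1}t}$ (consistent with $dy_{2}=y_{1}\wedge\bar y_{1}$ and $dy_{3}=\bar y_{1}\wedge y_{2}$). The crucial point is that all three $\alpha_{i}$, hence every weight $\mu=\alpha_{J}\bar\alpha_{L}$ occurring on $\bigwedge^{*,*}\n^{*}_{\C}$, is \emph{unitary}. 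Therefore the unitary characters of \cite[Lemma 2.2]{Kd} satisfy $\lambda_{i}=\mu_{i}$, and in the decomposition $B^{*,*}=C^{*,*}\oplus D^{*,*}$ of Section~\ref{th2} the summand $D^{*,*}$ vanishes, since its defining conditions $(\mu_{i})_{\vert_{\Gamma}}\neq1$ and $(\lambda_{i})_{\vert_{\Gamma}}=(\mu_{i})_{\vert_{\Gamma}}=1$ are incompatible. Thus $B^{*,*}=C^{*,*}$, and by Theorem~\ref{CORR} together with \cite[Theorem 3.5]{mc} the inclusion induces an isomorphism of the entire Fr\"olicher spectral sequence, $E^{*,*}_{r}(G/\Gamma)\cong E^{*,*}_{r}(C)$ for all $r$, and of abutments $H^{*}_{d}(\mathrm{Tot}\,C)\cong H^{*}_{dR}(G/\Gamma)$. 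So it suffices to show that the spectral sequence of the finite double complex $C^{*,*}$ degenerates at $E_{1}$.

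Next I would describe $C^{*,*}$ explicitly. On $\Gamma'=\Z+\sqrt{-1}\Z$ each of $\alpha_{1},\alpha_{3},\bar\alpha_{1},\bar\alpha_{3}$ restricts to the order-two character $t\mapsto(-1)^{t}$, while $\alpha_{2},\bar\alpha_{2}$ and the directions $x_{1},\bar x_{1}$ are trivial. Since the weights are unitary we have $\alpha^{-1}_{J}\beta_{J}=\bar\alpha^{-1}_{L}\gamma_{L}=1$, so there is no twisting and (as used in Section~\ref{th2}) $C^{*,*}$ is a genuine sub-BBA of $\bigwedge^{*,*}(\C\oplus\n)^{*}_{\C}$ via Remark~\ref{nodd}: it is exactly the span of monomials $x_{I}\wedge y_{J}\wedge\bar x_{K}\wedge\bar y_{L}$ containing an \emph{even} number of factors from $\{y_{1},\bar y_{1},y_{3},\bar y_{3}\}$. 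This $\Z/2$-grading is respected by $\partial$ and $\bar\partial$ (because $\bar\partial y_{2}=y_{1}\wedge\bar y_{1}$, $\bar\partial y_{3}=\bar y_{1}\wedge y_{2}$, $\partial\bar y_{2}=-y_{1}\wedge\bar y_{1}$, $\partial\bar y_{3}=y_{1}\wedge\bar y_{2}$ each preserve parity), so $\bigwedge^{*,*}(\C\oplus\n)^{*}_{\C}=C^{*,*}\oplus C^{*,*}_{\mathrm{odd}}$ splits as a direct sum of BBAs. Moreover $C^{*,*}=\bigwedge\langle x_{1},\bar x_{1}\rangle\otimes C^{*,*}_{\n}$, where $C^{*,*}_{\n}$ is the even part of $\bigwedge^{*,*}\n^{*}_{\C}$; since the torus factor $\bigwedge\langle x_{1},\bar x_{1}\rangle$ has closed generators, degeneration of $C^{*,*}$ is equivalent to degeneration of $C^{*,*}_{\n}$.

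Finally I would prove degeneration of $C^{*,*}_{\n}$ at $E_{1}$. As $\mathrm{Tot}\,C$ is a finite DGA of PD-type and $(C,\bar\partial)$ is of PD-type (Section~\ref{th2}), this is equivalent to the single numerical identity $\dim_{\C}H^{k}_{d}(\mathrm{Tot}\,C)=\sum_{p+q=k}\dim_{\C}H^{p,q}_{\bar\partial}(C)$ for all $k$, which I would verify by computing the two sides directly on the explicit finite-dimensional complex $C$, using Poincar\'e duality to halve the work. The main obstacle, and the point needing care, is showing that no higher differential survives on the even part. The conceptual reason it should hold is that the offending differentials of the nilmanifold model all have odd $\epsilon$-parity: in $\bigwedge^{*,*}\n^{*}_{\C}$ one finds the nonzero $d_{1}[\bar y_{3}]=[y_{1}\wedge\bar y_{2}]$ and the nonzero $d_{2}[\bar y_{2}\wedge\bar y_{3}]=[y_{1}\wedge y_{2}\wedge\bar y_{2}]$ (the latter forcing $r(N/\Gamma'')\geq3$), and in each case both source and target lie in the \emph{odd} summand $C^{*,*}_{\mathrm{odd}}$. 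Thus the order-two twist coming from the half-period factors $e^{\pm\pi\sqrt{-1}t}$ and the Gaussian lattice $\Z+\sqrt{-1}\Z$ selects precisely the even subcomplex, on which these differentials are absent. Making this precise --- locating \emph{every} nonzero higher differential of the model in $C^{*,*}_{\mathrm{odd}}$ --- would yield a cleaner proof than the brute-force dimension count and is the crux of the argument.
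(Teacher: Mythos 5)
Your preparatory reductions are all correct, and they are genuinely nice: since $\alpha_{1}=e^{\pi\sqrt{-1}t}$, $\alpha_{2}=1$, $\alpha_{3}=e^{-\pi\sqrt{-1}t}$ are unitary, every weight $\mu_{i}$ is unitary, hence $\lambda_{i}=\mu_{i}$, the summand $D^{\ast,\ast}$ vanishes and $B^{\ast,\ast}=C^{\ast,\ast}$ with no twisting; your parity description of $C^{\ast,\ast}$ (monomials with an even number of factors from $\{y_{1},y_{3},\bar y_{1},\bar y_{3}\}$) agrees with the paper's explicit lists of $B^{p,q}$; and the splitting $C^{\ast,\ast}=\bigwedge\langle x_{1},\bar x_{1}\rangle\otimes C^{\ast,\ast}_{\n}$ is legitimate. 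The genuine gap is that you stop exactly where the proof has to begin. After the reductions, the proposition is equivalent to the assertion that the finite double complex $C^{\ast,\ast}$ (equivalently $C^{\ast,\ast}_{\n}$) has no surviving higher differentials, and you propose two ways to verify this --- the dimension count $\dim H^{k}_{d}(\mathrm{Tot}\,C)=\sum_{p+q=k}\dim H^{p,q}_{\bar\partial}(C)$, or locating every nonzero higher differential of $\bigwedge^{\ast,\ast}\n^{\ast}_{\C}$ in the odd summand --- but you carry out neither, and you say yourself that making the second precise ``is the crux of the argument.'' That crux \emph{is} the proof. The paper's proof is precisely your route (a): it writes down bases of $B^{p,q}$, computes $H^{p,q}_{\bar\partial}$ and $H^{k}(\mathrm{Tot}\,B)$ explicitly, obtains $3$, $4$, $13$ on both sides in degrees $1,2,3$, and disposes of the remaining degrees by Poincar\'e and Serre duality together with the vanishing $\chi(G/\Gamma)=0$ and $\chi(G/\Gamma,\bigwedge^{i}T^{\ast}G/\Gamma)=0$ coming from real parallelizability and Hirzebruch--Riemann--Roch. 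Without some such computation, your text establishes only a correct statement of what remains to be proven.

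Note also that your preferred route (b) is not an independent argument but a disguised form of route (a). The parity involution only shows that the higher differentials \emph{respect} the splitting of $\bigwedge^{\ast,\ast}\n^{\ast}_{\C}$ into even and odd summands; it gives no structural reason why the restriction of $d_{r}$ to the even summand should vanish --- the invariant part of an involution on a double complex can perfectly well carry nonzero higher differentials. Observing that the two known nonzero differentials $d_{1}[\bar y_{3}]=[y_{1}\wedge\bar y_{2}]$ and $d_{2}[\bar y_{2}\wedge\bar y_{3}]$ are odd is evidence, not proof: to know that \emph{every} surviving differential is odd you must compute the even part's spectral sequence, which is route (a) again. So the honest completion of your plan is the explicit finite-dimensional computation, i.e.\ essentially the paper's proof; your reductions would shorten it (the complex $C^{\ast,\ast}_{\n}$ is half the size of $\bigwedge^{\ast,\ast}\n^{\ast}_{\C}$, and on the finite model the identity of Euler characteristics between $H^{\ast}_{d}$ and $H^{\ast,\ast}_{\bar\partial}$ holds automatically, so the Hirzebruch--Riemann--Roch step could even be avoided), but they do not replace it.
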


\begin{proof}
Since $G/\Gamma$ is real parallelizable,
the Euler class and all  Chern classes are trivial.
Hence we have
\[\sum^{8}_{k=0}(-1)^{k}\dim H^{\ast} (G/\Gamma)=\chi(G/\Gamma)=0\]
and
by the Hirzebruch--Riemann--Roch theorem (see \cite{Hir}), for each $i\in \{1,2,3,4\}$, we have
\[\sum (-1)^{q}\dim H^{i,q}_{\bar \partial}(G/\Gamma)=\chi(G/\Gamma, \bigwedge^{i}T^{\ast}G/\Gamma)=0
\]
and so
\[\sum_{(p,q)=(0,0)}^{(4,4)}(-1)^{p,q} \dim H^{p,q}_{\bar \partial}(G/\Gamma)=0.
\]
Hence by the Poincar\'e Duality and Serre Duality, it is sufficient to show the equalities
\[\dim H^{1,0}_{\bar\partial}(G/\Gamma)+\dim H^{0,1}_{\bar\partial}(G/\Gamma)=\dim H^{1}(G/\Gamma),
\]
\[\dim H^{2,0}_{\bar\partial}(G/\Gamma)+\dim H^{1,1}_{\bar\partial}(G/\Gamma)+\dim H^{0,2}_{\bar\partial}(G/\Gamma)=\dim H^{2}(G/\Gamma),\]
\[\dim H^{3,0}_{\bar\partial}(G/\Gamma)+\dim H^{2,1}_{\bar\partial}(G/\Gamma)+\dim H^{1,2}_{\bar\partial}(G/\Gamma)+\dim H^{0,3}_{\bar\partial}(G/\Gamma)=\dim H^{3}(G/\Gamma).
\]

We consider $B^{\ast,\ast}$ as Theorem \ref{CORR}.
We have an isomorphism $H^{\ast,\ast}_{\bar\partial}(G/\Gamma)\cong H^{\ast,\ast}_{\bar\partial}(B^{\ast,\ast})$.
We also have  $H^{\ast}(G/\Gamma)\cong H^{\ast}({\rm Tot}B^{\ast,\ast})$.
Then we have
\[
B^{1,0}=\langle x_{1}, \, y_{2}\rangle,\,\, B^{0,1}=\langle \bar x_{1},\,  \bar y_{2}\rangle,
\]
\[B^{2,0}=\langle x_{1}\wedge y_{2}, \, y_{1}\wedge y_{3}\rangle, \, B^{0,2}= \langle \bar x_{1}\wedge\bar y_{2}, \, \bar y_{1}\wedge \bar y_{3}\rangle,
\]
\[
B^{1,1}=\langle x_{1}\wedge \bar x_{1},\, x_{1}\wedge \bar y_{2},\, y_{1}\wedge \bar y_{1},\, y_{1}\wedge \bar y_{3},\, y_{2}\wedge \bar x_{1},\, y_{2}\wedge \bar y_{2},\, y_{3}\wedge \bar y_{1},\, y_{3}\wedge \bar y_{3}\rangle,
\]
\[B^{3,0}=\langle x_{1}\wedge y_{1}\wedge y_{3},\, y_{1}\wedge y_{2}\wedge y_{3}\rangle, \, B^{0,3}=\langle \bar x_{1}\wedge\bar y_{1}\wedge\bar y_{3},\,\bar y_{1}\wedge \bar y_{2}\wedge\bar y_{3}\rangle,
\]
\begin{multline*}
B^{2,1}=\langle x_{1}\wedge y_{1}\wedge \bar y_{1},\, x_{1}\wedge y_{1}\wedge \bar y_{3},\, x_{1}\wedge y_{2}\wedge \bar x_{1}, \, x_{1}\wedge y_{2}\wedge\bar y_{2},\, x_{1}\wedge y_{3}\wedge \bar y_{1},\, x_{1}\wedge y_{3}\wedge \bar y_{3},\, y_{1}\wedge y_{2}\wedge \bar y_{1},\, y_{1}\wedge y_{2}\wedge \bar y_{3} \\
y_{1}\wedge y_{3}\wedge \bar x_{1},\, y_{1}\wedge y_{3}\wedge \bar y_{2},\, y_{2}\wedge y_{3}\wedge \bar y_{1},\ y_{2}\wedge y_{3}\wedge \bar y_{3} \rangle,
\end{multline*}
\begin{multline*}
B^{1,2}=\langle x_{1}\wedge \bar x_{1}\wedge \bar y_{2},\, x_{1}\wedge \bar y_{1}\wedge \bar y_{3},\, y_{1}\wedge \bar x_{1}\wedge \bar y_{1},\, y_{1}\wedge \bar x_{1}\wedge \bar y_{3},\, y_{1}\wedge  \bar y_{1}\wedge \bar y_{2}, \, y_{1}\wedge \bar y_{2}\wedge \bar y_{3}\\
y_{2}\wedge \bar x_{1}\wedge \bar y_{2},\, y_{2}\wedge \bar y_{1}\wedge \bar y_{3},\, y_{3}\wedge \bar x_{1}\bar y_{1},\, y_{3}\wedge \bar x_{1}\wedge \bar y_{3},\,y_{3}\wedge \bar y_{1}\wedge \bar y_{2},\, y_{3}\wedge \bar y_{2}\wedge \bar y_{3}
\rangle.
\end{multline*}

We compute
\[H^{1,0}_{\bar\partial}(G/\Gamma)=\langle [x_{1}]\rangle,\, H^{0,1}_{\bar\partial}(G/\Gamma)=\langle [\bar x_{1}],\, [\bar y_{2}]\rangle,\]
\[H^{2,0}_{\bar\partial}=0,\, H^{1,1}_{\bar\partial}=\langle [x_{1}\wedge \bar y_{2}], \, [y_{1}\wedge \bar y_{3}],\, [y_{3}\wedge \bar y_{1}]\rangle,\, H^{0,2}_{\bar\partial}=\langle [\bar y_{1}\wedge \bar y_{3}]\rangle,\]
\[H^{3,0}_{\bar\partial}=\langle [y_{1}\wedge y_{2}\wedge y_{3}]\rangle,\, H^{0,3}_{\bar\partial} =\langle[\bar x_{1}\wedge\bar y_{1}\wedge\bar y_{3}],\, [\bar y_{1}\wedge\bar y_{2}\wedge\bar y_{3}]\rangle,\]
\[H^{2,1}_{\bar\partial}=\langle [x_{1}\wedge y_{1}\wedge \bar y_{3}],\, [x_{1}\wedge y_{3}\wedge \bar y_{1}],\,[y_{1}\wedge y_{2}\wedge \bar y_{3}],\, [y_{2}\wedge y_{3}\wedge \bar y_{1}]\rangle,\]
\[H^{1,2}_{\bar\partial}=\langle [x_{1}\wedge \bar x_{1}\wedge \bar y_{2}],\,[x_{1}\wedge \bar y_{1}\wedge \bar y_{3}],\, [y_{1}\wedge \bar x_{1}\wedge  \bar y_{3}],\, [y_{1}\wedge \bar y_{2}\wedge \bar y_{3}],\,  [y_{3}\wedge \bar x_{1}\wedge \bar y_{1}],\, [y_{3}\wedge \bar y_{1}\wedge \bar y_{2}\rangle.
\] 

We also compute
\[H^{1}(G/\Gamma)=\langle[x_{1}],\, [\bar x_{1}], \, [y_{2}+\bar y_{2}]\rangle,
\]
\[H^{2}(G/\Gamma)=\langle [x_{1}\wedge \bar x_{1}],\, [y_{1}\wedge \bar y_{3}],\, [y_{2}\wedge \bar y_{2}-\bar y_{1}\wedge \bar y_{3}+y_{1}\wedge y_{3}],\, [y_{3}\wedge \bar y_{1}]\rangle,
\]
\begin{multline*}
H^{3}(G/\Gamma)=\langle [y_{1}\wedge y_{2}\wedge y_{3}],\, [x_{1}\wedge y_{1}\wedge \bar y_{3}],\, [x_{1}\wedge y_{3}\wedge \bar y_{1}],\,[y_{1}\wedge y_{2}\wedge \bar y_{3}],\, [y_{2}\wedge y_{3}\wedge \bar y_{1}],\\ [y_{3}\wedge\bar x_{1}\wedge \bar y_{1}],\, [-x_{1}\wedge y_{2}\wedge \bar x_{1}+x_{1}\wedge\bar x_{1}\wedge \bar y_{2}],\, [y_{1}\wedge \bar x_{1}\wedge \bar y_{3}],\\ [x_{1}\wedge y_{2}\wedge \bar y_{2}-x_{1}\wedge \bar y_{1}\wedge \bar y_{3}+x_{1}\wedge y_{1}\wedge y_{3}],\, [y_{1}\wedge \bar y_{2}\wedge \bar y_{3}]\\
[-y_{2}\wedge\bar x_{1}\wedge \bar y_{2}-\bar x_{1}\wedge \bar y_{1}\wedge \bar y_{1}+y_{1}\wedge  y_{3}\wedge \bar x_{1}],\,[y_{3}\wedge\bar y_{1}\wedge \bar y_{2}],\, [\bar y_{1}\wedge \bar y_{2}\wedge \bar y_{3}]
\rangle.
\end{multline*}
By these computations,  we have
\[\dim H^{1,0}_{\bar\partial}(G/\Gamma)+\dim H^{0,1}_{\bar\partial}(G/\Gamma)=3=\dim H^{1}(G/\Gamma),
\]
\[\dim H^{2,0}_{\bar\partial}(G/\Gamma)+\dim H^{1,1}_{\bar\partial}(G/\Gamma)+\dim H^{0,2}_{\bar\partial}(G/\Gamma)=4=\dim H^{2}(G/\Gamma),\]
\[\dim H^{3,0}_{\bar\partial}(G/\Gamma)+\dim H^{2,1}_{\bar\partial}(G/\Gamma)+\dim H^{1,2}_{\bar\partial}(G/\Gamma)+\dim H^{0,3}_{\bar\partial}(G/\Gamma)=13=\dim H^{3}(G/\Gamma).
\]
Hence the proposition follows.
\end{proof}
\end{example}

\ \\
{\bf  Acknowledgements.} 
The author would like to express many thanks to Daniele Angella for his  remarks which lead to  improvements in the revised version.

\end{document}